\documentclass[a4paper,12pt]{amsart}

\usepackage{amssymb,amsbsy,amsmath,amsfonts,amssymb,amscd}
\usepackage{latexsym}
\usepackage{graphics}
\usepackage{color}
\usepackage{comment}
\input xy
\xyoption{all}

\theoremstyle{plain}
\newtheorem{thm}{Theorem}[section]
\newtheorem{theorem}[thm]{Theorem}

\newtheorem{lemma}[thm]{Lemma}
\newtheorem{corollary}[thm]{Corollary}
\newtheorem{proposition}[thm]{Proposition}
\theoremstyle{definition}
\newtheorem{remark}[thm]{Remark}

\newtheorem{defin}[thm]{Definition}

\newtheorem{assumption}[thm]{Assumption}

\newtheorem{example}[thm]{Example}

\numberwithin{equation}{section}

\newcommand{\sB}{{\mathcal B}}
\newcommand{\sC}{{\mathcal C}}

\newcommand{\sE}{{\mathcal E}}
\newcommand{\sF}{{\mathcal F}}

\newcommand{\sL}{{\mathcal L}}

\newcommand{\sP}{{\mathcal P}}
\newcommand{\sQ}{{\mathcal Q}}
\newcommand{\sR}{{\mathcal R}}
\newcommand{\sS}{{\mathcal S}}
\newcommand{\sT}{{\mathcal T}}

\newcommand{\PP}{\ensuremath{\mathbb{P}}}

\newcommand{\CC}{\ensuremath{\mathbb{C}}}

\newcommand{\ZZ}{\ensuremath{\mathbb{Z}}}

\newcommand{\hol}{\ensuremath{\mathcal{O}}}

\newcommand\la{\lambda}
\newcommand\s{\sigma}

\newcommand\al{\alpha}

\newcommand\ga{\gamma}
\newcommand\de{\delta}
\newcommand\e{\epsilon}

\newcommand{\ra}{\ensuremath{\rightarrow}}

\def\eea{\end{eqnarray*}}
\def\bea{\begin{eqnarray*}}

\newcommand\dual{\mathrel{\raise3pt\hbox{$\underline{\mathrm{\thinspace d
\thinspace}}$}}}
\newcommand\qe{\ifhmode\unskip\nobreak\fi\quad $\Box$}       

\def\BOX{\hfill\lower.5\baselineskip\hbox{$\Box$}}

\newtheorem{theo}{Theorem}[section]

\newtheorem{remarkk}[theo]{Remark}
\newenvironment{rem}{\begin{remarkk}\rm}{\end{remarkk}}

\def\blu{\color[rgb]{0,0,1}}

\usepackage{hyperref}
\title [Genus $2$ pencils for $p_g=K^2=1$ and plane cubics. ]{ Genus $2$ pencils on  surfaces with $p_g=K^2=1$,    envelopes, and conics tangent to plane cubic curves.}

\author{Fabrizio Catanese, }
\address {Accademia Nazionale dei Lincei, Palazzo Corsini, via della Lungara 10, 00165 Roma, Italia.}
\email{Fabrizio.Catanese@gmail.com, Fabrizio.Catanese@lincei.it}
\author{Part (II)    coauthored by  Noah Ruhland}
\address {Mathematisches Institut der Universit\"at Bayreuth\\
NW II,  Universit\"atsstr. 30\\
95447 Bayreuth}
\email{noah.ruhland@uni-bayreuth.de}
\thanks{AMS Classification: 14J10, 14J29, 14E05, 14Q10, 32J15, 32Q40.\\
 }
 
 \date{\today}

\begin{document}

\maketitle

\begin{abstract}
 We consider $(1,1)$-surfaces, namely, 
minimal compact complex surfaces $S$
with $p_g (S) =K_S^2=1$: for these the bicanonical map is a covering of degree $4$ of the plane  $\PP^2$.

And we answer a question posed by Meng Chen, whether they can contain a genus 2 pencil (this is the standard reason
of failure of  birationality of the bicanonical map).

Our main theorem says that those which admit a  genus 2 pencil form an irreducible  subvariety
of codimension $3$ in their moduli space  $\frak M_{[1,1]}$; moreover, the general such surface admits exactly $12$ such pencils.

The real fun is to relate this variety to the geometry of pencils of conics in the plane
everywhere tangent to a cubic curve and a line. We investigate the corresponding variety $\sT$ of triples
and provide explicit equations using the classical theory of envelopes:  among others, equations given
in terms  of  the Weierstrass normal form of the cubic.
\end{abstract}

\bigskip

\addtocontents{toc}{\protect\setcounter{tocdepth}{1}}
\tableofcontents

\section{Part I: standard and non standard case of non-birationality of the bicanonical map}

\section{Introduction}

In the theory of algebraic surfaces $S$ of general type, while the non-birationality of the pluricanonical maps 
$\Phi_m$, for $ m \geq 3$, occurs only for $ m \leq 5$ and for a finite number of families of surfaces
(see \cite{superficie}, \cite{cm}), 
the bicanonical map $\Phi_2$  is  not birational 
in the  so-called {\bf standard case} where the surface $S$ admits a pencil of curves of genus $g=2$.
Because the bicanonical map of a smooth curve $C$ is not birational if and only if $C$ has genus $\leq 2$.

This exception gives rise to an  infinite number of families.

Instead,  the  study of {\bf non-standard cases} for the non-birationality of the bicanonical map, in view of Bombieri's result \cite{cm} and  improvements by Reider \cite{reider} and others starting from \cite{3authors}, boils down to the analysis of a finite number of families of 
 surfaces with low invariants, namely with $p_g (S) \leq 6$ and $K_S^2 \leq 9$,  (we refer to \cite{ciliberto} for a survey concerning non-standard cases
of  non-birationality of $\Phi_2$, and further references therein).

This study turns out to be quite useful in the analysis of pluricanonical maps of threefolds and higher dimensional varieties,
where by induction one reduces to lower dimensional varieties.
This article was indeed motivated by a question raised by Meng Chen. 
He  asked whether, if  $S$ is a $(1,1)$-surface, that is, a complex minimal surface of general type with $p_g(S)=1, K^2_S=1$, then   necessarily $S$  does not admit any   pencil of curves of geometric genus 2. We show here that this  is not the case,
for $S$ in a proper subvariety of the moduli space.

For $(1,1)$-surfaces  the bicanonical map $\Phi_2 : S \ra \PP^2$ is a  degree $4$ morphism, hence we have a non-standard case of non-birationality.
The more general question is    whether  also  the standard case could simultaneously occur (for all, or for some of the surfaces in the family).

Our main result  is the precise answer to the above question, summarized by  the following  theorem:

\begin{theorem}\label{main}
(I) Let $\frak M_{[1,1]}$ be the moduli space of  complex minimal surfaces $S$ of general type with $p_g(S)=1, K^2_S=1$, which is irreducible of 
dimension $18$. 

Then   the general surface $S \in \frak M_{[1,1]}$  does not admit a   free genus $2$ pencil,  that is,
a fibration $ \Psi : S \ra C$ with general fibre $F$ of  genus $g=2$.

(II) In particular, the general surface $S \in \frak M_{[1,1]}$  does not admit a   genus $2$ pencil.

(III) The subvariety $ \frak N_2 \subset \frak M_{[1,1]}$ consisting of the surfaces $S$ admitting a free genus $2$ pencil 
is a dense subset of  the  irreducible subvariety  $ \frak N'_2$, of dimension 15,   
  consisting of the surfaces  whose 
  canonical model $X \subset \PP (1, 2,2,3, 3)$ admits
 equations, in the  coordinates 
$ (v, x_2,  x_3, z_3, z_4),$ 
and setting $  x_1 : = v^2, \ x := (x_1,x_2,x_3),$ in the normal form  as in \eqref{specialeqs}
$$ z_3^2 + c_1(x)= 0, z_4^2  + a_2 (x) v z_3 + c_2(x)=0. $$
These equations  exhibit the  bicanonical morphism as an iterated double covering of the plane 
factoring through a degree 2 Del Pezzo surface $Y$, 
$$Y : = \{ w^2 +  x_1 c_1(x) =0\}.$$

Moreover, letting  $ \frak N_2(0)  \subset   \frak N_2$ be the  intersection with the  open set consisting of the surfaces
$S$ with  $K_S$ ample, and defining similarly $ \frak N'_2(0)  \subset   \frak N'_2$,  {\blu $ \frak N'_2(0)$ is non empty }and 
we have equality $ \frak N_2(0)  =  \frak N'_2 (0).$

(IV) The general surface $S$ in $ \frak N_2$  admits exactly $12$ genus $2$ pencils.

\end{theorem}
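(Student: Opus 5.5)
The plan is to work throughout with the canonical model $X \subset \PP(1,2,2,3,3)$ of a $(1,1)$-surface, a complete intersection of two sextics. We use coordinates $(v,x_1',x_2,x_3,z_3,z_4)$ after normalization, and the bicanonical map is the projection to $\PP^2$ with coordinates $x=(v^2,x_2,x_3)$, of degree $4$. We take a free genus $2$ pencil $\Psi: S \ra C$ with fibre $F$.

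\textbf{Step 1: numerics of the pencil.} Since $q(S)=0$, we have $C=\PP^1$. Moreover $K_S F + F^2 = 2$ with $F^2=0$, so $K_S F=2$. By the Hodge index theorem, $(K_S F)^2 \geq K_S^2 F^2$ holds trivially. We next compute $\Phi_2(F)$. The bicanonical system restricted to $F$ contains $|2K_F|$, which gives the degree $2$ map to a conic. Hence $F$ maps $2:1$ onto a curve $\Gamma$, and $\deg \Gamma \cdot 1 \cdot 2 = 2K_S F = 4$ forces $\Gamma$ to be a conic. So the images of the fibres form a one-dimensional family of conics $\Gamma_t$, each with preimage $\Phi_2^{-1}(\Gamma_t) = F_t + F_t'$. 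Counting degrees ($4\cdot 2 = 2\cdot 2 + \dots$), the preimage of each conic splits as two genus $2$ curves, or as $2F$. The key geometric consequence is that the degree $4$ cover $S\to\PP^2$ must have an intermediate quotient on which the conics lift to a pencil. That is, $\Phi_2$ factors as $S \ra Y \ra \PP^2$, with $Y$ a degree $2$ Del Pezzo surface, a double plane branched on a quartic, and the conics $\Gamma_t$ splitting in $Y$. Here I would use the involution/Galois structure: the hyperelliptic involutions of the fibres glue to an involution $\sigma$ of $S$, and $S/\sigma$ is rational. Showing that this involution exists and commutes appropriately with the cover is the \textbf{main obstacle}. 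I would argue it via the relative canonical map of $\Psi$, which exhibits $S$ birationally as a double cover of a $\PP^1$-bundle. Pushing down to $\PP^2$ then gives the factorization $S\to Y=S/\sigma\to \PP^2$.

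\textbf{Step 2: normal form.} Given the involution, diagonalize its action on the graded ring. The invariant part must be the coordinate ring of $Y=\{w^2+x_1c_1(x)=0\}$ with $w=vz_3$. This forces one equation to be $z_3^2+c_1(x)=0$ and the other, anti-invariant in $z_4$, to be $z_4^2 + a_2(x)vz_3 + c_2(x)=0$. This gives \eqref{specialeqs} and shows $\frak N_2 \subset \frak N'_2$. For the converse on the ample locus, note that on $Y$ (a double plane) the conics tangent to the quartic branch curve split into pairs of conics; there are $12$ $\PP^1$-families of such conics, corresponding to the $\tfrac{1}{2}\cdot 56\cdot$... or rather to the $126$ roots pairs$/$conic bundles, giving conic bundle structures on $Y$. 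Pulling back a conic bundle of $Y$ whose fibres have the right contact with the branch curve of $S\to Y$ (given by $a_2 v z_3 + c_2$, restricted by $v$) yields a genus $2$ fibration. Here we check via Hurwitz that the preimage of a conic fibre has genus $2$, i.e. that the branch divisor meets each fibre in $6$ points. We also check that for $K_S$ ample there are no base points, so that the pencil is free.

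\textbf{Step 3: dimension, irreducibility, count.} The parameter count for \eqref{specialeqs} goes as follows. We have $c_1,c_2$ cubics in $x$ and $a_2$ a linear form in $x$, subject to weighted automorphisms preserving the form. This gives $\dim=15$, with irreducibility because $\frak N'_2$ is the image of an open set of an affine space. Hence part (III), and it has codimension $3$ in the $18$-dimensional $\frak M_{[1,1]}$, which gives (I) and (II). Nonemptiness of $\frak N'_2(0)$ is shown by exhibiting an explicit smooth example. For (IV), the genus $2$ pencils correspond to the conic bundle structures on $Y$ compatible with the branch data. I would identify these with pencils of conics tangent to the cubic $c_1$ and the line $x_1=0$, i.e. the triples of the variety $\sT$, and count $12$ of them via the $W(E_7)$ structure of $Y$. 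I would then verify by a degeneration or monodromy argument that for general data exactly $12$ are realized.
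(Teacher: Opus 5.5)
Your proposal has genuine gaps: the normal form is asserted rather than derived, (II) is not argued, and the converse and the count in (IV) are not established (the count as planned rests on a false bijection).

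\textbf{The inclusion and (II).} You follow the paper's strategy for $\frak N_2\subset\frak N'_2$ (hyperelliptic involution, then diagonalize on the canonical ring). But the decisive step is only asserted: saying the invariant ring ``must be'' that of $Y=\{w^2+x_1c_1=0\}$ is exactly the normal form you are trying to derive. The involution itself is not the obstacle you think: it is biregular because $S$ is minimal, and it acts trivially on $H^0(2K_S)$ because the hyperelliptic involution acts trivially on $H^0(2K_F)$. What is missing is the eigenvalue analysis.
\begin{itemize}
\item Rationality of $Y$ gives $p_g(Y)=0$, so $v$ is anti-invariant. Normalize so that $\iota$ fixes $v$.
\item If $z_3,z_4$ both have eigenvalue $-1$, then $X$ is a bidouble cover and $Y=\{u^2=c_1c_2\}$ is a K3 surface with rational double points, contradicting rationality.
\item If $z_3\mapsto z_3$, $z_4\mapsto -z_4$ and one equation is anti-invariant, that equation is divisible by $z_4$, contradicting irreducibility.
\item Only the remaining case yields \eqref{specialeqs}.
\end{itemize}
Likewise, (II) does not follow from the dimension count, since your Step 1 treats only free pencils. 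You must rule out a pencil of genus $2$ curves with $C^2>0$. The index theorem and $\pi_1(S)=0$ give $K_S\cdot C\ge 2$, while after resolving the base points $2=K_T\cdot\tilde C=K_S\cdot C+E\cdot\tilde C>2$.

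\textbf{The converse and (IV).} Here you take a genuinely different route from the paper. The paper uses the square roots $\sL_\e$ of $\hol_B(2)$ on $B=X_1\cup C_1$ with $\eta\neq 0$, where $H^0(\hol_{C_1}(\eta))=0$ forces $H^0(\sL_\e)\cong H^0(\hol_{X_1}(1))$, followed by a degeneration argument on the ample locus. Your route through conic bundles on $Y$ can be made to work and makes the count transparent, but as written it proves nothing.
\begin{itemize}
\item $Y$ is not a smooth degree $2$ Del Pezzo surface, so $126$ is the wrong numerology. For $K_S$ ample it has three nodes over $X_1\cap C_1$, and $X\to Y$ is branched there as well: these are the images of the isolated fixed points of $\iota$.
\item Meeting the branch curve $\{a_2w+c_2=0\}\in|-3K_Y|$ in six points is automatic. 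The real condition is $C\cdot E_i=0$ for the three $(-2)$-curves on the minimal resolution $\tilde Y$; a conic through two nodes pulls back to genus $3$.
\item To get existence, write $C=-K_{\tilde Y}+\alpha$ with $\alpha$ a root of $K_{\tilde Y}^\perp\cong E_7$. Use $-K_{\tilde Y}\equiv 2\tilde R+E_1+E_2+E_3$, where $\tilde R$ is the strict transform of $R=\{x_1=w=0\}$. Then the roots orthogonal to the $E_i$ form a $D_4$, giving $24$ conic bundles whose fibres miss the nodes.
\end{itemize}
Your identification of conic bundles with the triples of $\sT$ is $2:1$, not bijective, as your own Step 1 already hints ($\Phi_2^{-1}(\Gamma_t)=F_t+F'_t$). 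The components $L_\la$ and $j(L_\la)$ of $\phi_Y^{-1}(Q_\la)$ meet at the point of $R$ over $\la$. So they lie in different conic bundles, with classes $C$ and $-2K_{\tilde Y}-C$ (roots $\pm\alpha$). They pull back to distinct genus $2$ pencils $F$ and $F'\equiv 4K_S-F$, with $F\cdot F'=8$. The $12$ pairs $\pm\alpha$ match the paper's $12$ line bundles $\sL_\e$, so $12$ is the number of quadratic pencils of plane conics. You must account for the factor $2$ between these and genus $2$ pencils before asserting a count in (IV). The paper's final step, which passes from $12$ line bundles to $12$ pencils, does not address this factor either.
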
 

Statement (II) follows from a simple observation by
 Meng Chen,  that,   if $S$ admits  a genus $2$ pencil, then  this pencil must necessarily be a free genus $2$ pencil.

The proof of Theorem \ref{main} is based  on three steps:  first we show that the existence of a genus $2$ pencil implies that
the surface belongs to the irreducible subvariety $\frak N'_2$.

Secondly, we show that, for a general surface in $\frak N'_2$,  the existence of a genus $2$ pencil 
is equivalent  to a statement about the existence of a 1-parameter family (pencil)  of conics $Q_\la$ in the plane which satisfy some conditions, the main one being  (*):  that they are all everywhere tangent  to  a fixed line $X_1$  in the plane and to a fixed  plane cubic curve $C_1$ which is smooth or, more generally, reduced and not of additive type.

Thirdly, we show that $\frak N_2(0) = \frak N'_2(0)$, using a limiting argument.

In the second section we shall first prove assertion (I), showing that $\frak N_2$ is dense in   the irreducible subvariety $\frak N'_2$ consisting of the surfaces with  equations in normal form  \eqref{specialeqs}.  The subvariety 
$\frak N'_2$ is irreducible of dimension 15,  contains a Zariski open set where   $K_S$ is ample, and  we show,
 for $S$ general, for instance if $K_S$ is ample (that is, $S$ is equal to the canonical model $X$), the equivalence of the existence of such a free genus $2$ pencil with the 
existence of  a `special' quadratic pencil of conics, satisfying Assumption \ref{**} (see Definition \ref{tangent}).

To show preliminarily that $\overline{\frak N_2} = \frak N'_2$ it suffices to show that such a pencil of conics exists for each  choice of 
a line and of a reduced cubic not of additive type.

 To explain the relation between the first part of the paper and the second, let us consider the following
elementary example: if we take a smooth quadric $\sQ \subset \PP^3$, then it is well known
that $\sQ \cong \PP^1 \times \PP^1$, hence $\sQ$ admits two fibrations onto $\PP^1$.

On the other hand, projection with centre a point $O \in \PP^3, \ O \notin \sQ,$ yields a 
finite morphism $\psi \colon \sQ \ra \PP^2$, a double covering branched on a smooth conic $\sC$
( $\sQ = \{ z^2 = x^2 + y^2 + 1\}$ in affine coordinates).

The fibres of one fibration, which were disjoint lines in $\PP^3$, map to lines in $\PP^2$,
which are necessarily no longer disjoint. How can this happen? 

This is simple to see in this example: the lines $F(j)_t$ of one fibration  map to the lines $L_t$
in the plane tangent to $\sC$ at the point $t$. Since $L_t$ is tangent to $\sC$,
its inverse image in $\sQ$ splits as the sum $F(1)_t + F(2)_t$, whereas the inverse image 
of the intersection point $ P = L_t \cap L_s$ consists of the two intersection
points  $F(1)_t \cap  F(2)_s, F(2)_t \cap  F(1)_s$.
 
Hence the main philosophy is that a fibration can be detected downstairs from the existence 
of a family of curves everywhere tangent to the branch curve. In this article we have a family of conics
everywhere tangent to a curve which is the union of a cubic and a line.

The second part of the article  is devoted to the problem of finding explicit  equations for the variety $\sT$ consisting of the triples: cubic, line, and  quadratic pencil $Q_{\la}$ of  generically smooth everywhere tangent conics. As stated in Theorem \ref{main}, this variety is a covering of degree $12$ of the variety consisting of 
 the pairs: $(C_1, X_1)$, where $C_1$ is a reduced cubic not of additive type, and $X_1$
 is a line not contained in $C_1$.

 Rather than giving very complicate equations for $\sT$, we show that the other two projections are birational,
 and produce explicitly the inverse maps.

Namely, in the first section of  Part II   we shall show the existence of  quadratic pencils of conics 
$Q_{\la}$ everywhere tangent to a fixed line,
and shall see, using the theory of envelopes,  that the conics  are also tangent to a plane cubic,
whose equation will  be explicitly given.

In the  following   section we shall  start instead  from any fixed smooth cubic curve $C_1$ and study quadratic pencils of conics 
$Q_\la$ which are everywhere tangent to the  smooth plane cubic curve $C_1$, showing, again by the theory of envelopes, 
 that there is a line $X_1$ which is tangent to 
each $Q_\la$. We also obtain a theorem which could be classical, identifying the variety of rank 2 conics 
everywhere tangent to a smooth cubic curve $C_1$.

This  second approach  leads also to an explicit construction of 
 the family of  conics 
$Q_\la$ starting  from the Weierstrass normal form of the equation of $C_1$, from the choice of a non-trivial 2-torsion 
line bundle $\eta \in Pic^0(C_1)$, and the choice of a linear pencil (a line)  in the  linear system $|H + \eta|$, where
 $H$ is the  
degree $3$ divisor on $C_1$ giving the embedding $C_1 \subset \PP^2$.

\section{First steps of the proof of the main Theorem \ref{main}: equivalence of the  existence of genus $2$ pencils to a question about quadratic pencils of conics everywhere tangent to a line and a cubic.}

\begin{proof}

 Let us first  prove  assertion (II).

Let $C$ be an irreducible  curve on $S$ of geometric genus 2 such that $|C|$ has dimension at least 1 and contains a pencil $C_t$
of curves of geometric genus 2. Then, if $C^2 =0$, we have a free pencil of curves of genus 2, whose general element is smooth.

Assume now that $C^2 > 0$. Observe that, by the index theorem, $ K_S \cdot C \geq 2$, otherwise
$C$ would be homologous to $K_S$.
Since $S$ is simply connected,  $C$ would be linearly equivalent 
to $K_S$, contradicting that $dim |C| \geq 1.$ 

Let $\pi : T \ra S$ be a birational modification such that the moving parts $\tilde{C}_t$ of the inverse images of the 
curves $C_t$ in the pencil yield  a free pencil of genus 2. Then $\tilde{C}_t^2 =0$, $K_T \tilde{C}_t=2$.

Since $C^2 >0$, we have an exceptional divisor $E$ such that

$$ 2 =   K_T \tilde{C}_t= (\pi^* (K_S) + E) \tilde{C}_t > K_S \cdot C \geq 2,$$
a contradiction.

{\em Let us now pass to proving  the more complicated items (I) and (III).}

Let $X$ be the canonical model of $S$: then $X$ is a weighted complete intersection of type $(6,6)$ in the 
 weighted projective space $\PP (1, 2,2,3, 3)$, with coordinates 
$$ (v, x_2,  x_3, z_3, z_4), \ {\rm with \ respective \ weights} \ (1,2,2,3,3).$$

This was proven in \cite{cat78} (see also \cite{cat80}), and it was shown that the bicanonical map 
$$ \phi : = \phi_2 : S \ra \PP^2 $$ is a degree $4$ morphism given by 
$ \phi (v, x_2,  x_3, z_3, z_4) = (v^2, x_2,  x_3)$.

Set for convenience $ x_1 : = v^2, x : = (x_1, x_2, x_3)$, and observe that to be a  weighted complete intersection of type $(6,6)$
means that 
$$ X : = \{ (v, x_2,  x_3, z_3, z_4) |  f_1 (v, x_2,  x_3, z_3, z_4) = f_2 (v, x_2,  x_3, z_3, z_4) =0)\},$$
where $f_1, f_2$ are weighted homogeneous of degree $6$.

We can write 
\begin{equation}\label{twoeqs}f_j (v, x_2,  x_3, z_3, z_4) = Q_j ( z_3, z_4) + a_j (x) v z_3 + b_j (x) v z_4 + c_j(x),
\end{equation}
with $Q_j$ a quadratic form, $a_j, b_j$ linear forms, and $c_j$ a cubic form.

The two quadratic forms $Q_1, Q_2$ must be linearly independent, else, by taking a linear combination of
$f_1, f_2$  we can assume that $Q_2 \equiv 0$, and then the degree of $\phi$ would not be $4$, but $2$, as we 
have, for fixed $x$, one quadratic and one linear equation for $(z_3, z_4)$: this is a contradiction.

{\bf Assume now that $S$ admits a (free) genus $2$ pencil}: 

then the hyperelliptic involution on the fibres induces a
birational automorphism $\iota$  of $S$, which is biregular since $S$ is minimal;  we get  a biregular  involution  also on the
canonical model $X$, which we continue to denote by  $\iota$.

\medskip

Since $\iota$ is a biregular involution on $S$,  $ Fix(\iota)$ consists of a finite union of smooth curves, 
and of isolated fixpoints $p$, such that the  action  of $\iota$ on the tangent space at $p$ equals  multiplication by $-1$.

In particular, $ \Sigma  : = S /  \iota$ is a normal surface whose only singularities are $A_1$-singularities, called nodes, corresponding to
the isolated fixpoints of   $\iota$. And $\Sigma$ dominates via a birational morphism  the normal surface $Y : = X / \iota$, whose singularities are 
either ADE singularities (also called RDP's = Rational Double Points) or quotients of these by an involution
 (see e.g. \cite{autRDP} pages 10 and following for a list).

{\bf Observe, for later use, that the singularities of $Y$ are rational singularities, hence they are RDP's if they
are hypersurface double points.}

Moreover, clearly the  bicanonical map $\phi$ factors through $Y$, hence $\Psi$ induces a  rational map
$ \psi : Y \dashrightarrow \PP^1$.

More  importantly, $Y$ is covered by the quotients of the genus 2 fibres,  which are rational curves, hence $Y$ is a ruled surface,
 so it has geometric genus zero, whence there are no nonzero  $\iota$-invariant  sections in $H^0(\hol_S(K_S))$:
 in our case, this means that $v$ is an eigenvector for $\iota^*$ with eigenvalue $-1$.

To simplify some formulae, instead of looking at  the action of $\iota$ on the canonical ring, we look  at its action
on the weighted projective space $\PP(1,2,2,3,3)$ and we may  assume that  $v \mapsto v$,
composing $\iota$ with the 
 weighted $\CC^*$-action  given by multiplication by $-1$.

\bigskip

 Since $x_1, x_2, x_3$ are eigenvectors with eigenvalue $+1$, we can assume that also $z_3, z_4$ are 
eigenvectors. And they cannot both have eigenvalue $+1$, since otherwise $\iota$ would  act as the identity.

 Hence there are two cases:

 Case 1): both  $z_3, z_4$ have eigenvalue $-1$.

 Case 2): $z_3$ has eigenvalue $+1$, $z_4$ has  eigenvalue $-1$.

\medskip

{\bf Case 1):} By a linear change of variables involving $z_3, z_4$ we can obtain as in \cite{cat78} that $Q_1 = z_3^2$, $Q_2 = z_4^2$.

In this  case the polynomials $f_j$ of \eqref{twoeqs} are necessarily  eigenvectors with eigenvalue $+1$, hence, since we assumed $v \mapsto v$,  the polynomials $a_j, b_j$ are identically zero
and we have the equations

$$ z_3^2  +  c_1(x) =0, \ z_4^2  +  c_2(x) =0.$$
These equations  exhibit  $X \ra \PP^2$ as a finite Galois covering with group $(\ZZ/2)^2$, a so-called bidouble covering.

We have two intermediate double coverings $Z_j : = \{ w_j^2 = x_1 c_j\}$, where $w_j : = v z_j$, and the 
third intermediate quotient $ Y := X / \iota$, defined, setting $ u := z_3 z_4$ and observing that $u$ is invariant,
  by 
$$ Y = X / \iota = \{ (x,u) | u^2 = c_1 c_2(x)\}.$$

The singularities of $Y$ are double point hypersurface singularities, which are rational,
hence they are Rational Double Points (see Theorems 2.2, 2.4, 2.5 of \cite{autRDP}, showing that the non RDP quotients 
of RDP's by an involution have local embedding dimension $4$ or $5$.)

Since the branch locus of $Y \ra 
\PP^2$ has degree $6$, $Y$ is a  K3 surface with Rational Double Points,  contradicting that $Y$ should be ruled.

 Hence case 1) is excluded.

\medskip

{\bf Case 2):} 

We can assume that $f_1, f_2$, respectively $Q_1, Q_2,$  are eigenvectors for $\iota$, with $f_j, Q_j$
having the same eigenvalue.

For eigenvalue $-1$ the only quadratic term available  is $z_3 z_4$, hence we may assume 
that $f_1$ has eigenvalue $+1$.

Case 2-): assume that instead $f_2$ has eigenvalue $-1$.

Then we may assume $Q_2 = z_3 z_4$, $Q_1 = \la_3 z_3^2 + \la_4 z_4^2$.

Since $f_2$ is an eigenvector with eigenvalue $-1$ it follows immediately that $f_2$ is divisible by $z_4$,
a contradiction because $f_2$ is irreducible.

\smallskip

Case 2+): We can assume that $f_1, f_2$ have both eigenvalue $+1$.

Hence we may  assume, by  combining linearly the two equations,   that $Q_1 = z_3^2, Q_2 = z_4^2$. 

Moreover  $b_1, b_2$ are then identically zero.

Hence we get the two equations of the form

$$ z_3^2 + a_1 (x) v z_3  + c_1(x)= z_4^2  + a_2 (x) v z_3 + c_2(x)=0. $$

Completing the square for the first equation, we may assume that $a_1$ is identically zero, 
hence the equations  have now the form

\begin{equation}\label{specialeqs}
z_3^2 + c_1(x)= z_4^2  + a_2 (x) v z_3 + c_2(x)=0. 
\end{equation}

These equations exhibit  $X$ as an iterated double covering of $\PP^2$, and since $z_3$ has  
$\iota$- eigenvalue $1$, we see, setting $ w : = v z_3$,  that 
$$Y : = \{ w^2 +  x_1 c_1(x) =0\},$$
namely, $Y$ is a normal Del Pezzo surface of degree $2$ with RDP's as singularities (by the same token as before)  and $\phi_Y : Y \ra \PP^2 $ 
is  ramified on the quartic curve union of a line $X_1 = \{x_1=0\}$ and a cubic $C_1= \{c_1=0\}$.

As such, $Y$ is ruled.

Moreover, $Y$ has singular points where $ w= x_1= c_1(x)=0$, or over the singular points of $C_1 : = \{ x | c_1(x)=0\}$.

\bigskip

{\bf Observation:} If  $Y$ is nodal (this holds true if $S=X$, that is, $K_S$ is ample)   $ \{ x | x_1 = c_1(x)=0\}$  consists of three distinct points.

\bigskip

Looking then at the double covering $X \ra Y$ we see that it is branched on the intersection of $Y$ with the cubic
$ \{ w a_2(x) + c_2(x)=0 \}$ (i.e., this is a  divisor in $|-3K_Y|$).

 Hence straightforward calculation yields the following:

  \begin{lemma}\label{ample}
   Let $\frak N'_2$ be the subvariety of the moduli space consisting of surfaces whose equations can be 
   put in the normal form 
   \eqref{specialeqs}. 
   
   Then the canonical model $X$ of a surface $S$  in $\frak N'_2$ is smooth 
   (equivalently, $S=X$ and $K_S$ is ample) if:
   
   (1)   $  C_1= \{c_1(x)=0\}$ is a smooth plane cubic,
   
   (2) the intersection of the line $X_1$ with $C_1$ is transversal, that is, 
   $ \{ x_1= c_1(x)=0\}$ consists of 3 distinct points,
   
   (3)  the intersection of $Y$ with the cubic
$ \{ w a_2(x) + c_2(x)=0 \}$ is smooth and does not contain the points  $  x_1= c_1(x)=0$ of $Y$.
    \end{lemma}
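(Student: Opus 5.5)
We will show smoothness of $X$ by analysing the tower of double coverings $X \ra Y \ra \PP^2$, checking separately the points with $v\neq 0$ and the points with $v=0$ (including the singular points of the weighted projective space $\PP(1,2,2,3,3)$). Once $X$ is smooth, it has no RDP's, so $S=X$ and $K_S$ is ample; conversely, $K_S$ ample means $S=X$, which is smooth. This gives the stated equivalence.

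\textbf{Step 1: $Y$ is smooth.} By (1) and (2), the quartic $X_1 \cup C_1$ is reduced and has only the three transversal intersection points of $X_1$ with $C_1$ as singular points, and these are ordinary nodes. Hence the degree $2$ Del Pezzo surface $Y=\{w^2+x_1c_1(x)=0\}$ is smooth except for three $A_1$ points, lying over $X_1\cap C_1$ (where $w=0$).

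\textbf{Step 2: the double cover $X \ra Y$ where $v \neq 0$.} On the open set $v\neq 0$ we normalise $v=1$. Then $x_1=v^2$ and $z_3 = w/v$ are determined up to the $\mu_2$-action, and $X$ becomes the double cover of $Y$ given by $z_4^2 = -(a_2 w + c_2)$, using $w = vz_3$. A double cover of a smooth surface branched on a smooth curve is smooth. By (3), the branch curve $B=Y\cap\{wa_2+c_2=0\}$ is smooth and avoids the three nodes of $Y$, so $X$ is smooth at all points with $v\neq 0$. One must also check that the quotient by $v\mapsto -v$ (the weighted $\CC^*$-action) creates no singularity. The fibre over a point of $Y$ with $x_1\neq 0$ is two points swapped freely, so $X\ra Y$ restricted there is a genuine degree $2$ map, as computed.

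\textbf{Step 3: the locus $v=0$.} This is the main obstacle. Here $x_1=0$, so we are over the line $X_1$, and the equations become $z_3^2+c_1(0,x_2,x_3)=0$ and $z_4^2+c_2(0,x_2,x_3)=0$, since the $a_2 v z_3$ term vanishes. I would use the Jacobian criterion in the orbifold chart. The derivative of $f_1$ with respect to $v$ is $2v\,\partial_{x_1}c_1$, which vanishes at $v=0$, so smoothness must come from the remaining derivatives.

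For the points with $z_3\neq0$: the point is smooth iff the curve $\{x_1=0\}$ of the covering is smooth there. This amounts to the restricted double-cover equations being nondegenerate, and holds exactly when $c_1|_{X_1}$ and the branch restriction have simple zeros, i.e. when $X_1$ is transversal to $C_1$ (condition (2)) and $B$ is smooth along $X_1$.

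At the points where $z_3=0$, i.e. over $x_1=c_1=0$, one sees that the point of $X$ lies over a node of $Y$. Condition (3), that $B$ avoids these points, gives $c_2\neq0$ there, so $z_4\neq0$. The local equation is then $z_3^2 + (\text{local equation of } c_1) + (\text{terms in } v^2)=0$. It is smooth because $c_1$ vanishes simply there (condition (1)) and $\partial_{z_4}f_2\neq 0$.

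Finally, I would verify that $X$ does not pass through the singular points of $\PP(1,2,2,3,3)$. The points with $v=z_3=z_4=0$ would force $c_1=c_2=0$ on $X_1$, which condition (3) excludes. The points with $v=x_2=x_3=0$ would force $z_3=z_4=0$, which is impossible. Hence the local $\mu_2$ and $\mu_3$ quotient singularities do not meet $X$.
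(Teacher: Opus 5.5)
\textbf{The gap is in Step 3, at the points with $v=0$, $z_3\neq 0$}, i.e.\ the points of $X$ over $X_1\setminus C_1$. You set $v=0$ in the equations before differentiating, which throws away the term $a_2vz_3$. You then argue that $X$ is smooth there iff the curve $\{v=0\}$ is smooth, iff $c_2|_{X_1}$ (``the branch restriction'') has simple zeros, ``i.e.'' iff $B$ is smooth along $X_1$.

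\begin{itemize}
\item The first equivalence holds only in one direction: a smooth Cartier curve forces a smooth surface, not conversely.
\item The last equivalence is false. Smoothness of $B$ does not make $B$ transversal to the ramification curve $\{x_1=w=0\}$ of $Y\ra\PP^2$, and that transversality is what simple zeros of $c_2|_{X_1}$ means.
\end{itemize}

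Here is a counterexample. In the chart $x_2=1$, take a point with $x_3=0$, $c_1(0,1,0)\neq 0$, $a_2(0,1,0)\neq 0$ and $c_2(0,1,x_3)=ux_3^2$ with $u$ a unit. Locally $B=\{a_2w+c_2+O(w^2)=0\}$ is smooth, and the canonical curve $\{v=0\}$ has a node $z_4^2+ux_3^2=0$. Yet $X$ is smooth there, because $f_2=a_2(0,1,0)z_3v+z_4^2+ux_3^2+\dots$ has a linear term in $v$. So the criterion you reduce to is not implied by (3), and the derivative you discarded, $\partial f_2/\partial v=a_2z_3$, is exactly what rescues smoothness.

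The repair is easy, by either of two routes.
\begin{itemize}
\item Compute the full Jacobian at $v=0$ in the variables $(v,x_3,z_3,z_4)$. Its rows are $(0,\partial_{x_3}c_1,2z_3,0)$ and $(a_2z_3,\partial_{x_3}c_2,0,2z_4)$. For $z_3\neq 0$ the rank drops exactly when $z_4=a_2=\partial_{x_3}c_2=0$, i.e.\ exactly when $B$ is singular at the image point.
\item Better, observe that where $z_3\neq0$ the functions $w=vz_3$ and $x_3$ are local coordinates on $Y$. Indeed, on $Y$ one has $x_1c_1(x)=-w^2$ with $c_1\neq 0$, so $x_1$ is a function of $(w,x_3)$. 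Hence $X\ra Y$ is still the double cover $z_4^2=-(a_2w+c_2)$ there, and your Step 2 applies verbatim. The only points needing separate treatment are then the three points $v=z_3=0$ over the nodes of $Y$ (the isolated fixed points of $\iota$).
\end{itemize}
The second route is the paper's viewpoint: it regards $X\ra Y$ as the double cover branched on $B\in|-3K_Y|$ (and at the nodes) and calls the rest a straightforward calculation.

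Two smaller points:
\begin{itemize}
\item At the points over the nodes, smoothness of $z_3^2+c_1(v^2,1,x_3)=0$ requires $\partial_{x_3}c_1\neq 0$, a simple zero of $c_1|_{X_1}$. That is condition (2), not (1). If $C_1$ were smooth but tangent to $X_1$, you would get $z_3^2+\alpha v^2+\gamma x_3^2+\dots$, an $A_1$ point.
\item On the chart $v=1$ there is no residual $\mu_2$-action to check, since $v$ has weight $1$.
\end{itemize}
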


   \begin{lemma}
   Let $\frak N'_2$ be the subvariety of the moduli space consisting of surfaces whose equations can be 
   put in the normal form 
   \eqref{specialeqs}. Then $\frak N'_2$ is irreducible of dimension $15$.
    \end{lemma}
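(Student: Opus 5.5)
Irreducibility will come from writing $\frak N'_2$ as the image of an irreducible parameter space under the moduli map, and the dimension $15$ from a parameter count minus the dimension of the subgroup of weighted automorphisms preserving the normal form \eqref{specialeqs}.

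\textbf{Parameter space.} The data in \eqref{specialeqs} are a plane cubic form $c_1(x)$ (10 coefficients), a linear form $a_2(x)$ (3 coefficients) and a cubic form $c_2(x)$ (10 coefficients), with $x=(v^2,x_2,x_3)$. This gives an affine space $\mathbb A^{23}$. The locus where the equations define a canonical model of a $(1,1)$-surface (RDP singularities only) is Zariski open in it. This open set is nonempty by Lemma \ref{ample}: take $C_1$ a smooth cubic meeting $X_1=\{x_1=0\}$ transversally, and $c_2, a_2$ general, so that by Bertini the branch curve $Y\cap\{wa_2+c_2=0\}$ in $|-3K_Y|$ is smooth and avoids the three points over $X_1\cap C_1$. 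Hence $\frak N'_2$ is the image of an irreducible $23$-dimensional variety $U$, so it is irreducible and $\dim \frak N'_2 = 23-\dim(\text{generic fibre})$.

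\textbf{Automorphisms preserving the normal form.} Next I compute the group $G$ of automorphisms of $\PP(1,2,2,3,3)$ that preserve the shape of \eqref{specialeqs}, together with the allowed recombinations of $f_1,f_2$.
\begin{itemize}
\item The linear substitutions of $(x_2,x_3)$ combined with adding multiples of $v^2$ form a group of dimension $4+2=6$.
\item Rescaling $v$, $z_3$, $z_4$ gives $3$ further dimensions.
\item Substitutions $z_4\mapsto z_4+(\text{cubic})$, and adding $f_1$ to $f_2$, also need checking. A substitution $z_4\mapsto \alpha z_4+\beta z_3 + g_3(v,x)$ creates terms $z_3z_4$, $z_4\cdot(\ldots)$ or $z_3^2$. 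The $z_3^2$ term can be absorbed by subtracting a multiple of $f_1$, but the others break the form, which forces $\beta=0$ and $g_3=0$. Similarly $z_3\mapsto z_3+ (\text{odd terms } v\cdot(\ldots))$ reintroduces an $a_1$ term, which is forbidden.
\item Replacing $f_2$ by $f_2+\mu f_1$ changes $c_2$ by $\mu c_1$ and $z_4^2$ by $z_4^2+\mu z_3^2$. This changes the form, unless we accept a $z_3^2$ term, so it is excluded.
\item Overall scalings of $f_1,f_2$ are already accounted for by the rescalings of $z_3,z_4$. The weighted $\CC^*$-action is one more redundancy, since it acts trivially on $\PP$.
\end{itemize}
Counting: $\mathrm{GL}$-type on $(x_2,x_3)$ plus shifts by $v^2$ gives $6$; scalings of $v$, $z_3$, $z_4$ give $3$; subtracting $1$ for the weighted $\CC^*$ leaves $8$. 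So $\dim G=8$, and $23-8=15$.

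\textbf{Finiteness of stabilisers.} It remains to show the fibres of $U\to\frak M_{[1,1]}$ are, generically, unions of finitely many $G$-orbits with finite stabilisers. Any isomorphism between two canonical models is induced by an automorphism of the canonical ring, hence by a weighted projective automorphism. For a general member, the normal form is determined up to $G$ by three pieces of data: the unique (up to sign) involution $\iota$ with $v\mapsto v$ of the type in Case 2+; its eigenspace decomposition; and the normalisation of $a_1$ to $0$. So two normal forms giving isomorphic surfaces differ by $G$ composed with a finite ambiguity. Finiteness of the stabiliser follows from the finiteness of $\mathrm{Aut}(S)$ for surfaces of general type. Hence the generic fibre has dimension $8$, and $\dim\frak N'_2=15$.

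\textbf{Main obstacle.} The main obstacle is the third step. One must prove that a general surface in $\frak N'_2$ has only finitely many involutions of the required kind, so that the normal form is unique up to $G$ and finite choices, and check that no hidden continuous reparametrisation has been missed. I would verify this by computing the tangent map, or by showing directly that an infinitesimal automorphism of $\PP(1,2,2,3,3)$ preserving the ideal $(f_1,f_2)$ while keeping the form must lie in $\mathrm{Lie}(G)$. As a cross-check, I would confirm the count $15 = 18-3$ against the codimension $3$ asserted in Theorem \ref{main}.
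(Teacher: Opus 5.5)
Your proposal is correct and is essentially the paper's own argument. The paper also parametrizes by the $23$-dimensional space of triples $(c_1,c_2,a_2)$ and divides by the $9$-dimensional group formed by the plane transformations preserving $\{x_1=0\}$ together with the scalings of $z_3,z_4$; this group is exactly your $v$-scaling, $\mathrm{GL}_2$ on $(x_2,x_3)$, shifts by $v^2$ and $z$-scalings, taken modulo the weighted $\CC^*$-action, giving $23-(9-1)=15$. Your treatment of the generic fibres is more careful than the paper's (whose only addition is the cross-check $15=3+12$ via the moduli of the nodal Del Pezzo $Y$, $|-3K_Y|\cong\PP^{12}$ and finiteness of $\mathrm{Aut}(Y)$), and the obstacle you flag is already resolved by your own ingredients: finiteness of $\mathrm{Aut}(X)$ gives finitely many involutions, each of which fixes the normal form up to $G$ by the eigenspace argument, so uniqueness of $\iota$ is not needed.
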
 
   \begin{proof}
   The equations are parametrized by an open set of the affine space of dimension $23$ whose elements $(c_1, c_2, a_2)$ are 
   a pair of cubic equations and a linear equation in $x = (x_1, x_2, x_3)$.
   
   But we have to divide by the group of transformations $\ga$ of the plane preserving the line $x_1$, and also allow multiplication of $z_3, z_4$ by nonzero constants $d_3, d_4$. This is a group of dimension $9$, but contains the $\CC^*$-action. 
   Hence we get dimension $ 23- (9-1) = 15$.
   
   Another way to see it is that the 3-nodal Del Pezzo surface $Y$ has 3 moduli, and the branch locus of the double covering
   $ X \ra Y$ belongs to a  projective space of dimension $12$. Since the group of automorphisms of $Y$ is finite, 
   we conclude that $\frak N'_2$ has dimension $15= 3 + 12$.
   
   \end{proof}

   \subsection{Geometry of the involution $\iota$}
   The fixed locus of $\iota$ intersects the general fibre in the $6$ Weierstrass points,
hence the fixed curve of $\iota$ contains a $6$-section. 

The canonical curve $ \{ v=0\}$ can be written as $D+ E$, where $D$ is  irreducible 
with $K_S \cdot D=1$, and $K_S \cdot E=0$, so that $E$ is a sum of $(-2)$-curves. 

We denote by $D'$ the image of $D$ in the canonical model $X$.

Clearly $D, D'$ are   left   invariant by $\iota$.

If  $D^2 >0$, then, by the index theorem and since $S$ is 1-connected, $D=K_S$ and $ E=0$.
 
  If instead $D^2< 0$, since $D^2 + 1 \geq -2$,
 $D^2 \in \{-1,-3\}$, and in the first case $D$ has arithmetic genus $1$, while in the second case $D$ is smooth of genus $0$, hence there are two fixpoints of $\iota$ on $D$.

There are a priori two cases:

\begin{itemize}
\item
(Hor) $D$ is   horizontal, that is, $D$ is not contained 
 in a fibre $F$ of $\Psi$.
 \item
 (Ver) $D$ is vertical, that is, it is contained in a fibre $F$ of $\Psi$: in this case the sum of the   horizontal components
 of $E$ yields an effective  divisor  $E'$ with $E' \leq E$, such that $E' \cdot F = 2$.
\end{itemize}

If we have  a fixpoint $p$ of $\iota$, which  is a smooth point of  the invariant canonical curve $D+E$, 
there are   local holomorphic coordinates $(y_1, y_2)$  
such that, locally at $p$, $ D+E = \{ y_2=0\}$, $p = (0,0)$ and $ \iota (y_1, y_2) = (\e y_1, -y_2)$, with $\e = \pm 1$.

 Then the holomorphic 2-form $v$ can be written as:
$$ v =  y_2 U(y_1, y_2) dy_1 \wedge dy_2,$$
where $U$ is a unit in the local ring at $p$. By looking at the lowest degree term we see that $v$ is an eigenvector with eigenvalue $\e$.
Hence $\e= - 1$ and the involution is not the identity on the component of $D+E$ containing $p$.

This implies that any invariant component of $D+E$ which is horizontal must intersect the general  fibre in $2$
points, belonging to the hyperelliptic series, which 
 are exchanged by $\iota$. 
 
 \begin{rem}
 We shall consider  from now on the horizontal case, which certainly occurs if  $S=X$,
  which is the general case by virtue of Lemma \ref{ample}.

 Concerning  the vertical case, we consider the relative canonical model $X_{rel}$ of the fibration $ \psi 
 \colon S \ra \PP^1$, where we contract all the $-2$-curves which are vertical.
 
 As in the main theorem 4.7, page 1023, of \cite{c-p}, $X_{rel}$ is a finite double cover of a conic bundle $\sC \ra \PP^1$,
 with RDP's as singularities, $\sC \subset \PP$, and the branch locus intersects the $\PP^2$-fibres of $\PP \ra \PP^1$
 in  a relative cubic curve not containing 
 the singular points of $\sC$.
 
  It follows then that, if $D$ is vertical, $D$ must have arithmetic genus $1$,
 $D$ is  contained in a fibre $D+C$ where $D \cdot C=1$, and also $C$ has arithmetic genus $1$.
 
 Moreover, then the horizontal curve $E'$ must be irreducible and meets the general fibre in two distinct points.
 
 Since  $E'$ contracts to a point $P$ of $X$, this  would make all the images of the fibres 
 to pass through $P$, as well as the canonical curve $D'$.  Then their images in the plane under the morphism 
  $\psi$ would  be, as we shall show,  conics in the plane. And they will all be 
  passing through the same point of the line $X_1 = \{ x_1=0\}$ which is the  image of the canonical curve $D'$.

 The arguments of section 4 indicate that it is unlikely that this case occurs, but it would take long 
 to  exclude it completely.
 \end{rem}

\subsection{How to obtain the pencil of conics $Q_{\la}$}

The genus $2$ pencil $\psi : X \ra \PP^1$,
with general fibre $F$, induces a fibration $ \psi_Y : Y  \dashrightarrow  \PP^1$, with general fibre $L \cong \PP^1$.

Assume that we are in the horizontal case, where $D$ is not contained in a fibre:
then $ D \cdot F = 2$, and the image $D^0$ of $D$ in $Y$
is a section of $\psi_Y$,  which is now a morphism.

Observe  that,  if $\phi^*(R)$ is the inverse image of a line $R$  in $\PP^2$, then $\phi^*(R) \equiv 2 K_S$.

Therefore, using that $ \phi_* (F) = 2 (\phi_Y)_*(L) $,  we infer that  $(\phi_Y)_*(L) $, which is a priori   either a line $R$ or a conic $Q$,
cannot be a line:   else, the inverse image of these lines $R$ yield:
$ 2 K_S \equiv \phi^* (R) = F + M$, with $M \in | 2 K_S-F|$,
and we get a contradiction from
$$ 2 = 2 K_S ^2 = K_S (F + M)= 2 +  K_S  \cdot M,$$
an equality which implies that the divisors $M$ cannot move.

Therefore $Q$ is a conic, in general irreducible, and it must be  everywhere tangent to the branch locus,
so that $(\phi_Y)^*(Q)$ splits as $L  + j(L)$, where $j$ is the involution of $Y$ induced by the
 double covering $\phi_Y$. The family of curves $L$, $Q$ is parametrized by $\la \in D^0$, since $D^0$ is a section,
  hence we shall denote them $Q_\la$, with $\la \in \PP^1 \cong D^0$.
In particular $Q_\la$ is tangent to $X_1 \cong D^0$ in the point $\la$.

\begin{lemma}\label{quadratic-pencil}
Assume that we have a 1-parameter family of conics  $Q_\la$ in the plane, parametrized by $\la \in \PP^1$
such that there exists a line $X_1\subset  \PP^2$ and an isomorphism $ X_1 \cong \PP^1$ 
such that for each point $\la \in X_1$ there is 
exactly one conic $Q_\la$  tangent to $X_1$ in the point $\la$.

Then $\{ Q_\la\}$ is a quadratic pencil, that is, we have a conic $\sQ$  in the space of conics, $\cong \PP^5$.
\end{lemma}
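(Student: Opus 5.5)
We want to show that the family $\{Q_\la\}$, viewed as a curve in the space $\PP^5$ of conics, is a conic $\sQ$. Equivalently, we want to show that the map $\la \mapsto Q_\la$ is given by quadratic forms in $\la$. The plan is to compute directly in coordinates adapted to the line. Choose coordinates so that $X_1 = \{x_1 = 0\}$ and the isomorphism $X_1 \cong \PP^1$ is $\la = (x_2 : x_3)$. Then a conic $Q$ is tangent to $X_1$ at the point $(0:\la_0:\la_1)$, or contains $X_1$, exactly when its restriction to $X_1$ is a multiple of the square of the linear form vanishing at that point. That is, $Q(0,x_2,x_3) = c\,(\la_1 x_2 - \la_0 x_3)^2$ for some constant $c$, possibly $0$.

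First I parametrize the conics tangent to $X_1$ at $\la$. These are exactly the conics
$$Q = c\,(\la_1 x_2 - \la_0 x_3)^2 + x_1\,\ell(x),$$
where $\ell$ is a linear form. For fixed $\la$ they form a linear subspace $W_\la \subset \PP^5$ of dimension $3$. Its projective span contains the fixed plane $\Pi = \{x_1 \ell\} \cong \PP^2$ of conics containing $X_1$, together with the point $(\la_1 x_2 - \la_0 x_3)^2$. As $\la$ varies, this point traces the conic $\Gamma$ of double lines restricted to $X_1$, i.e. the Veronese image of $X_1$ inside the plane of forms in $x_2, x_3$.

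Next I consider the linear projection $p \colon \PP^5 \dashrightarrow \PP^2$ with centre $\Pi$, which sends $Q$ to $Q|_{X_1}$. The hypothesis says that for each $\la$ there is exactly one $Q_\la$ tangent at $\la$, and these conics vary algebraically, forming an irreducible curve $\sQ \subset \PP^5$. Tangency at a point forces $Q_\la \notin \Pi$ for general $\la$, since otherwise $Q_\la$ would contain $X_1$ and be tangent everywhere, contradicting uniqueness. Hence $p$ restricts to a map $\sQ \to \Gamma$ sending $Q_\la$ to $\la$. Uniqueness gives that this map is bijective on general points, so it is birational, and $\sQ$ is a rational curve. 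Its normalization is $\PP^1$ with parameter $\la$.

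Then I identify the parametrization explicitly. Write
$$Q_\la = c(\la)\,(\la_1 x_2 - \la_0 x_3)^2 + x_1\,\ell_\la(x),$$
where $c(\la)$ and the coefficients of $\ell_\la$ are, after clearing denominators, homogeneous forms in $(\la_0, \la_1)$ of some common degree $d$. The goal is to show that $d = 2$ can be achieved: the map has degree $2$ and is not a degenerate reparametrization, which makes $\sQ$ a conic in a plane of $\PP^5$.

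The key point, and the expected obstacle, is to control the degree of $\sQ$, i.e. to prove that $c(\la)$ is a nonzero constant (degree $0$) after removing common factors. I would argue as follows. Since $\sQ \to \Gamma$ is birational and $p$ is a linear projection, we have $\deg \sQ = \deg \Gamma + (\text{number of points of } \sQ \text{ on } \Pi, \text{ with multiplicity}) = 2 + \#(\sQ \cap \Pi)$. So it suffices to show $\sQ \cap \Pi = \emptyset$, i.e. that no member $Q_\la$ contains the line $X_1$. Were some $Q_{\la'}$ to equal $x_1 \ell$, it would be tangent to $X_1$ at every point. This would contradict the hypothesis that for each point there is exactly one conic of the family tangent there: that conic would then be tangent at all points, while the members $Q_\la$ for $\la \neq \la'$ are also tangent at $\la$. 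In the paper's situation I would also add that $Q_\la = (\phi_Y)_*(L)$ is never a conic through $X_1$, since $L$ is never contained in the ramification.

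With $\sQ \cap \Pi = \emptyset$, $p|_{\sQ}$ is a birational morphism onto the conic $\Gamma$ which is finite of degree $1$, and $\deg \sQ = 2$. Hence $\sQ$ is a plane conic in $\PP^5$, i.e. $\{Q_\la\}$ is a quadratic pencil.
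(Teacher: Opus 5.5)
Your argument is correct. The paper's proof does only two things: it chooses the same adapted coordinates, and it writes down the normal form $Q_\la=(\la_3x_2-\la_2x_3)^2+2x_1(\la_2^2M_2+\la_3^2M_3+\la_2\la_3M_1)$. It leaves implicit why the $x_1$-part is quadratic in $\la$. The implicit reason is a normalization: one scales the coefficient of the square to $1$, which is legitimate only because no $Q_\la$ contains $X_1$. Rescaling $(\la_2,\la_3)\mapsto t(\la_2,\la_3)$ then forces $\ell_{t\la}=t^2\ell_\la$, so $\ell_\la$ is a binary quadratic form in $\la$.

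Your route is the geometric form of the same fact. You project from the plane $\Pi$ of conics containing $X_1$ and count $\deg\sQ=2+\#(\sQ\cap\Pi)$. Equivalently, if $f\colon\PP^1\to\PP^5$ is the family and $f(\PP^1)\cap\Pi=\emptyset$, then $f^*\hol(1)=(p\circ f)^*\hol(1)=\hol(2)$. The advantage of your version is that it isolates the one point that genuinely needs checking, namely $\sQ\cap\Pi=\emptyset$, which the paper never mentions.

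One simplification: birationality of $p|_{\sQ}$ needs no uniqueness argument. Since $p(Q_\la)$ is the square of the linear form vanishing at $\la$, the map $\la\mapsto Q_\la$ is already injective. The uniqueness hypothesis is needed only to exclude members containing $X_1$, and you handle that step correctly.
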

   
   \begin{proof}
   
  Take  coordinates $x = (x_1, x_2,x_3)$ on the plane such that  a point $\la \in X_1$
    has coordinates $(0, \la_2, \la_3)$. 
    
    Then   the conic $Q_\la$ has then equation of the form
    $$ Q_\la = (\la_3 x_2 -  \la_2 x_3)^2 + 2 x_1 (\la_2^2 M_2(x) + \la_3^2 M_3(x)+ \la_2 \la_3 M_1(x) )=0,$$ 
    where $M_j = \sum_i m_j^ix_i$ is a linear form.

   \end{proof} 
 
    If the cubic $C_1$ is smooth,  $Q_\la$ cuts twice a  divisor $\de_\la \in |H + \eta|$, where $\eta$ is a 2-torsion divisor,
   and where $H$ is the divisor on $C_1 \subset \PP^2$ cut by a line.  The divisor $\eta$ has torsion order precisely  two. Otherwise
   $Q_\la$ would always be a line with multiplicity two, a contradiction.
  
  \begin{remark}

   Whence, the conic  $Q_\la$ is never a double line, but  it can consist of two lines:  
take for instance  a flexpoint $O$ as the origin of the elliptic curve $C_1$ and let $p_1, p_2$ be 
distinct 2-torsion points:
then  $|H|$ contains the divisors $O + 2 p_1$ and $O + 2 p_2$, whose sum is the double of  $ O + p_1+ p_2 \in |H + \eta|, \  \eta : = p_1 -p_2$.

If however the conic  $Q_\la$ consists of two lines, being tangent to the line $X_1$ is only possible if the double point
of $Q_\la$ lies on $X_1$.

\end{remark}

{\bf Conclusion:}  the only conics $Q_\lambda$ which consist of two lines are such that  $\la \in X_1 \cap C_1$, hence they are
at most  $3$.
 In fact, for $\la \in X_1 \cap C_1$,  since $Q_\la$ is  also everywhere tangent to $C_1$, if $C_1$  is transversal to the line $X_1$,
then $Q_\la$ has multiplicity $2$ at $\la$, hence consists of two lines.

\end{proof}

   There remains to see that  we can find a family of conics $Q_\la$ with the desired properties.
   
\begin{defin}\label{tangent}
Given a line $X_1$ and a cubic curve $C_1$ in the plane, 
not containing it, we shall say that a quadratic pencil of conics
(it is necessarily quadratic, by Lemma \ref{quadratic-pencil})
is a {\bf special everywhere tangent pencil of conics}  if the pencil is  
everywhere tangent to the two curves in a special way, i.e.,   it satisfies the following:
 \begin{assumption}\label{**} 
   (i)  $Q_\la$ is tangent to the line $X_1$ at the point $\la \in X_1$ and $Q_\la$  is generically smooth,
   and does never have rank equal to $1$;
   
   (ii) $Q_\la$ is also everywhere tangent to the cubic $C_1$.
 \end{assumption}  
\end{defin}
   
  Let  us illustrate again  the situation:
   the fact that $X$ admits a genus 2 fibration onto $X_1 \cong \PP^1$ is equivalent to the condition that the
   bicanonical map is an iterated double covering $ X  \ra Y  \ra \PP^2$ 
   where 
   $$Y : = \{ w^2 +  x_1 c_1(x) =0\}$$
 is a normal Del Pezzo surface of degree $2$ with RDP's as singularities, and $\phi_Y : Y \ra \PP^2 $ 
is  ramified on the reduced quartic curve $\sE $ union of a line $X_1 $ and a cubic $C_1$,
and $\sE$ is everywhere tangent to  a 
pencil of conics in the plane.

\section{End of the proof of the main Theorem \ref{main}: existence of the special pencil of conics}

We have shown that $\frak N_2 \subset \frak N'_2$, and that $\frak N'_2$ is an irreducible subvariety of dimension $15$.

 We first   show that $\overline{\frak N_2 }= \frak N'_2$: for this, it suffices to show that the general point in $ \frak N'_2$
is contained in $\frak N_2$. Hence, we only need to show that, for a general pair $X_1, C_1$ of a line and a smooth cubic, there is
a pencil of conics satisfying Assumption \ref{**} of Definition \ref{tangent}.

But, as stated in the theorem, we can prove indeed more: that this holds under the assumption that $X_1$ is not contained in $C_1$ and that $C_1$ is not of additive type.

\bigskip 
Let $C_1$ be a reduced  cubic curve in $\PP^2$ and $X_1$ a line which is not a component of  $C_1$.

Let $B : = C_1 \cup X_1$. The quartic $B$ has arithmetic genus equal to 3, and by virtue of the exact sequence
$$ 0 \ra \hol_B \ra ( \hol_{C_1} \oplus \hol_{X_1} ) \ra \hol_{X_1 \cap C_1} \ra 0$$
and the similar exact sequence for $\hol^*$, we get the associated long exact cohomology sequence,
since $Pic(X_1) = \ZZ$: 
$$ ( Pic) \ 1 \ra \CC^* \ra H^0(\hol^*_{X_1 \cap C_1}) \ra Pic(B) \ra Pic (C_1) \oplus \ZZ \ra1.
$$

The exact sequence (Pic) shows that in $Pic(B)$ there are  line bundles $\sL$ such that 
$\sL ^2 \cong \hol_B(2)$, and these restrict to $(H + \eta, 1)$, where $ 2  \eta \equiv 0$ on $C_1$.

However, in order that Assumption \ref{**} is satisfied, we should have that the restriction of $\sL$ on $C_1$
is of the form $\hol_{C_1}(H + \eta)$,
with $\eta \in Pic(C_1)_2$ nontrivial.

For this condition we need that $Pic^0(C_1)$ is not an additive group $\cong \CC$, or, as one says, $C_1$
 is not of additive type.

This requires that we avoid the Kodaira types $II, III, IV$, namely, that $C_1$ does not consist of a cuspidal cubic,
or a line tangent to a smooth conic, or three lines through a point.

In all other singular cases $\CC^* = Pic^0(C_1)$, hence the desired $\eta$ can be found
if $C_1$ is not of additive type.

The exact sequence (Pic)  specializes to 

$$ 1 \ra \CC^* \ra (\CC^*)^3 \ra Pic(B) \ra Pic (C_1) \oplus \ZZ \ra1$$
in the case where $X_1 \cap C_1$ consists of $3$ points.

This exact sequence shows that, if moreover $C_1$ is smooth, then in $Pic(B)$ there are 16 line bundles $\sL_\e$ such that 
$\sL_\e ^2 \cong \hol_B(2)$: they restrict to $\hol_{X_1}(1)$, respectively to $\hol_{C_1}(H + \eta)$,
with $\eta \in Pic(C_1)_2$, and moreover $\e = (\e_1, \e_2,\e_3)$ is a gluing factor such that $\e_i = \pm 1$.

That is, we have the exact sequence
$$ 0 \ra \sL_\e  \ra ( \hol_{C_1}(H + \eta) \oplus \hol_{X_1} (1) ) \ra \hol_{X_1 \cap C_1} \ra 0,$$
and  the associated sequence of global sections
$$ 0 \ra H^0(\sL_\e ) \ra ( H^0(\hol_{C_1}(H + \eta)) \oplus H^0(\hol_{X_1} (1)) ) \ra H^0(\hol_{X_1 \cap C_1}) = \CC^3 $$
is exact if $\eta$ is a nontrivial 2-torsion divisor.

Because then $H^0(\hol_{C_1}( \eta))=0$ and $H^0(\hol_{C_1}(H + \eta)) \ra H^0(\hol_{X_1 \cap C_1})$ is
an isomorphism, and then $H^0(\sL_\e )\cong  H^0(\hol_{X_1} (1)) $.
 
 Hence the sections $\la$ of $H^0(\hol_{X_1} (1)) $ yield unique sections $\s_{\la} $ of $H^0(\sL_\e )$ whose square is the zero set
 of the intersection of a conic $Q_{\la} $ with $B$, and such that the intersection of $Q_{\la} $
 with $X_1$  is the point $\la$ counted with multiplicity two.
 
 This shows that the  required family $Q_{\la} $  of conics exists for all 
 pairs of a reduced  cubic $C_1$ and of  a line $X_1$ not contained in $C_1$, if $C_1$ is not of additive type.

 And that, in the case where the  cubic $C_1$ is smooth  and  $C_1$ and the line $X_1$
 intersect transversally, we have exactly 12 line bundles such that Assumption \ref{**} is satisfied.
 
 To finish the argument, we just observe that the family $Q_{\la} $  of conics yields 
a family of lines in the Del Pezzo surface $Y$, because the inverse image of  $Q_{\la} $
splits as the union of two rational curves meeting in $4$ points, each having  intersection number 1 with the anticanonical divisor  of $Y$. In turn, when we take the double covering $X \ra Y$, the inverse image of one of these lines
is a double covering of a line branched in $6$ points by equation \eqref{specialeqs}, hence a genus $2$ curve.

 \begin{rem}\label{does-not}
   Let us consider here  the possibility that the curve $D$ is vertical, and that the genus 2 pencil does
  not induce a morphism on the canonical model $X$. In this case, all 
the conics $Q$ arising as images of genus $2$ fibres would pass through a single point of $X_1$.

Under this condition, then the above exact 
 sequence shows that we get a unique section of $H^0(\hol_{C_1}(H + \eta)) $ if $\eta$ is nontrivial: this is a contradiction to the
 fact that we have a covering family of curves. 
 
 This argument, however, only shows   that the 
 equations of the conics, when restricted to $C_1$,  cannot be squares of sections of
 a line bundle on $C_1$. 
 
Consider  the following example, where we work in affine coordinates $(x,y)$, setting $ x : = x_1$:  
 
 $$ C_1 = \{ y^2 - x^2 (x-\mu)=0\}, \ Q = \{  x + \la y^2 + a xy + b x^2=0 \}.$$
 
 The origin $x=y=0$ belongs to $X_1$ and is a double point of $C_1$, and  $Q$ is any    conic
 tangent to $X_1$ at the origin, and not singular at the origin.
 
 When is $Q$ 
 everywhere tangent to $C_1$ at two other points? 
 
Then we must  have
$$ y^2 = x^2 (x-\mu), \ x + \la y^2 + a xy + b x^2=0 \Rightarrow  x + \la  x^2 (x-\mu) + axy + b x^2=0 .$$
Dividing by $x$, we get
$$ 1 + \la  x (x-\mu)  + b x= - ay  \Rightarrow a^2 x^2  (x-\mu) = ( 1 + \la  x (x-\mu)  + b x)^2 ,$$
and the polynomial 
$$ ( 1 + x   (b- \la \mu)   + \la  x^2)^2 - a^2 x^2  (x-\mu)  $$
must be a square $ ( 1 + \al x + \la x^2)^2 $, hence

$$ a^2 x^2  (x-\mu) = x   (b- \la \mu - \al ) ( 2 + (b- \la \mu + \al )x + 2 \la x^2).$$

 It must be $a \neq 0$, else we have the symmetry $ (x,y) \mapsto (x, -y)$ and the solutions 
are four distinct points.

Then we get a contradiction, since the left hand side is not divisible by $x^2$.

 It would take longer to show
in greater  generality that we cannot have a pencil of conics  $Q_\la$ tangent to $X_1$ at the origin,
and passing all through a singular point of $C_1$,  thereby proving that the pencil yields a morphism on the canonical model.

  \end{rem}

 \subsection{ End of the proof of Theorem \ref{main}: $\frak N_2  (0) = \frak N'_2 (0)$.}
 
In view of the density $\overline{\frak N_2 }= \frak N'_2$ it suffices to show that $\frak N_2 (0)$ is closed 
 in $\frak N'_2 (0)$, via 1-parameter
families. 

We do it then:

\begin{theorem}
Assume that we have a family of $(1,1)$-surfaces $$\pi :  \sS \ra T,$$ where $T$ is a connected smooth curve, with $0 \in T$ and, 
for $ t \neq 0$, $S_t$ admits a genus two pencil $F_t$. 

Then, if $S_0$ has ample canonical divisor, then  
 also $S_0$ admits a genus $2$
pencil $F_0$.
\end{theorem}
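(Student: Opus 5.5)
I would argue by specializing the whole structure attached to the genus $2$ pencils, and not the pencils directly.

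\textbf{Step 1: the involutions and the normal form extend.} For $t\neq 0$ the hyperelliptic involution $\iota_t$ of the fibres of $F_t$ is a biregular involution of $S_t = X_t$. After a finite base change of $T$ (which is harmless), the $\iota_t$ form a family of automorphisms of $\sS\setminus S_0$ over $T\setminus\{0\}$. Since $K_{S_0}$ is ample, the relative canonical ring of $\sS\ra T$ is a flat family of weighted complete intersections $X_t\subset\PP(1,2,2,3,3)$. The automorphism group scheme of the fibres is finite and proper over $T$, because canonically polarized varieties have a separated moduli space. Hence the family $\iota_t$ extends to an automorphism $\iota_0$ of $X_0=S_0$ with $\iota_0^2=\mathrm{id}$. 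Concretely, $\iota_t$ acts linearly on the weighted coordinates $(v,x_2,x_3,z_3,z_4)$, with $v\mapsto v$ as normalized in Section 2. These matrices lie in a group of bounded type preserving the ideals, so they have a limit, and $\iota_0\neq \mathrm{id}$ because the eigenvalue pattern (Case 2$+$) is constant. By Section 2, the equations of $X_t$ can be put into the normal form \eqref{specialeqs} with respect to $\iota_t$. The eigenspace decomposition is locally constant in $t$, so $X_0$ is also in normal form, with $Y_0=\{w^2+x_1c_1^{(0)}(x)=0\}$.

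\textbf{Step 2: the conics specialize.} For $t\neq0$ the pencil $F_t$ gives a special everywhere tangent quadratic pencil $\sQ_t$ for the pair $(X_{1},C_{1,t})$ in the sense of Definition \ref{tangent}. By Lemma \ref{quadratic-pencil} this is a conic in $\PP^5$, encoded by the linear forms $M_1,M_2,M_3$, i.e. by a point of a fixed affine space $\CC^9$. Equivalently, by Section 3, it corresponds to one of the finitely many (twelve) line bundles $\sL_\e$ on $B_t$ with $\sL_\e^2\cong\hol_{B_t}(2)$ and nontrivial $\eta$. These form a finite étale cover of the locus where $C_1$ is smooth and transversal to $X_1$, and more generally a finite cover of the pairs with $C_1$ reduced and not of additive type. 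By Lemma \ref{ample}, ampleness of $K_{S_0}$ gives $C_{1,0}$ smooth and transversal to $X_1$. So the data $(\eta_t,\e_t)$ have a limit $(\eta_0,\e_0)$ with $\eta_0$ still a nontrivial $2$-torsion class, since the $2$-torsion group scheme of a smooth elliptic family is étale. Hence the coefficients $M_j(t)$ converge, and the sections $\s_\la$ of $H^0(\sL_{\e_0})$ built in Section 3 give a special everywhere tangent pencil $Q_{\la,0}$ satisfying Assumption \ref{**}.

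\textbf{Step 3: lift to $S_0$.} As at the end of Section 3, $\phi_{Y_0}^*(Q_{\la,0})$ splits into two lines of the Del Pezzo surface $Y_0$. Their preimages in $X_0$ are double covers branched in $6$ points of $\{wa_2+c_2=0\}$, and by condition (3) of Lemma \ref{ample} these six points are distinct for general $\la$. This yields a genus $2$ fibration $F_0$ on $S_0$.

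\textbf{Main obstacle.} The main obstacle is Step 1: showing that the limit of $\iota_t$ exists and is a nontrivial involution compatible with the weighted coordinates. I would handle it through the properness of $\mathrm{Aut}$ for canonically polarized surfaces, using the ampleness hypothesis. Alternatively, one can avoid it: each $S_t$, $t\neq0$, lies in the closed subset $\frak N'_2$, so $S_0$ does too. Then Section 3, applied directly to the smooth cubic $C_{1,0}$ and the transversal line, produces the pencil on $S_0$ without any limit of pencils. I would present this second route as the actual proof, with Step 2 needed only to identify the limit.
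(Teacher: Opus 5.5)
Your route differs from the paper's and can be completed, but not in the form you say you would present.

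\textbf{The paper's argument.} It never uses the involution, the normal form or the conics. It transports the topological class of $F_t$ to $S_0$. That class is of type $(1,1)$ because $R^2\pi_*\hol_{\sS}$ is a line bundle (base change, $p_g$ constant), and $\delta([F_t])$ is a section of it vanishing for $t\neq 0$. This gives a divisor $F_0$ with $F_0^2=0$ and $K\cdot F_0=2$, and semicontinuity gives $h^0(F_0)\geq 2$. Writing $|F_0|=|M|+\Phi$, the index theorem and parity leave two options. Either $KM+M^2=2$, which is a genus $2$ pencil. Or $KM=M^2=2$ with $\Phi\neq 0$ and $K\cdot\Phi=0$, which ampleness excludes.

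\textbf{The shortcut is circular.} Your ``second route'' does not avoid Step 1. The paper nowhere proves that $\frak N'_2$ is closed in $\frak M_{[1,1]}$. It is the image of an open set of normal-form equations, hence a priori only constructible. The paper's density argument only shows that $\frak N_2$ and $\frak N'_2$ have the same closure. That the limit $X_0$ again admits the normal form \eqref{specialeqs} is exactly the existence of a limit involution of type Case 2$+$. So the finiteness and properness of the relative automorphism scheme for canonically polarized surfaces (Matsusaka--Mumford) is the indispensable input of your proof. Your remark that the coordinate matrices ``have a limit'' is not an argument by itself: the group of weighted coordinate changes is not compact, and uniform boundedness in $t$ is precisely the properness statement.

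\textbf{Converse of Lemma \ref{ample}.} That lemma only gives sufficient conditions for smoothness, but you use its converse. You need that $X_0$ smooth implies $C_{1,0}$ is smooth and transversal to $X_1$. You also need that the branch curve $\{wa_2+c_2=0\}\cap Y_0$ is smooth and avoids the three nodes. This converse is true but must be checked. For example, in the chart $v=1$ the differential of $z_3^2+c_1(1,x_2,x_3)$ vanishes over a singular point of $C_1$, so $X_0$ would be singular there. A similar computation works in the chart $x_2=1$ over $X_1\cap C_1$.

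\textbf{Step 2 is unnecessary.} Once $X_0$ is in normal form with these properties, Section 3 applies to $S_0$ directly.

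\textbf{What each route buys.} The paper's argument is short and intrinsic, and uses ampleness only to exclude a fixed part made of $(-2)$-curves. It does not need to know beforehand that $S_0\in\frak N'_2$; it is in fact what the paper uses to obtain $\frak N_2(0)=\frak N'_2(0)$. Yours needs moduli-theoretic properness and the whole conic construction of Section 3, including its sketchy last step lifting the conics to genus $2$ curves. In exchange it gives more structure: a limit involution on $S_0$, and an identification of the limit pencil through the limiting $2$-torsion and gluing data $(\eta_0,\e_0)$.
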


\begin{proof}
For each $S= S_t$ the exponential sequence yields an exact sequence,
identifying a divisor class as a cohomology class of type $(1,1)$
(theorem of Lefschetz on divisors):

$$ 0 \ra  H^1(\hol^*_S) \ra H^2(S, \ZZ) \xrightarrow{\de}  H^2(\hol_S) .$$

Since all the fibres are diffeomorphic, the class of $F_t$ extends to $F_0$, and we claim that
its image inside $H^2(\hol_{S_0})$ is zero.

This follows from the base change theorem (see \cite{av}), saying that, since the cohomology groups
 $H^j(\hol_{S_t})$ have constant dimension, the
direct image $\sR^2\pi_* (\hol_{\sS})$ is a line bundle on $T$.

We have a holomorphic section which vanishes for $ t \neq0$, hence its limit is also zero.

Therefore we have found a divisor class $F_0$ on $S_0$ such that $$F_0^2 =0 , K_{S_0} \cdot F_0 = 2.$$ 

By semicontinuity, since $ h^0( \hol_{S_t}(F_t))=2$ for $t \neq 0$, we get $ h^0( \hol_{S_0}(F_0))\geq 2$.

Let us prove that $S : = S_0$ has a genus $2$ pencil. Set $K : = K_S$.

Then we have that the linear system $F_0$ can be written as the sum of a movable linear system 
an a fixed part:

$$ |F_0| = |M| + \Phi.$$

Since the surfaces $S$ is of general type, the genus of the system $M$ is at least $2$, and if indeed 
$KM + M^2 = 2$, then we have found the desired genus $2$ pencil.

Otherwise, we must have $KM + M^2 \geq 4$.

Since $K$ is nef, $ K F_0 =2$ implies $KM + K\Phi =2$. Since $M$ is movable,
we have $ KM \geq 1$, hence there are only the two cases $KM=1$ or $KM=2$.

The index theorem says that 
\begin{equation}\label{index}M^2 < (MK)^2 \end{equation} because otherwise $ M=K$ but $p_g=1$,
a contradiction. 

Then equation \eqref{index} yields: for $KM=1$, $M^2 \leq 0$ , contradicting $KM + M^2 \geq 4$.

For $KM=2$ we get $M^2 \leq 3$:  since now $M^2$ is even, we infer that $M^2=2$,
hence $K \Phi=0$, and if we have a fixed part, it consists of $-2$-curves.

This is a contradiction to the assumption that the canonical divisor $K$ of $S_0$ is ample.

\end{proof}

\section{Part II : The theory of envelopes 
and explicit equations for special pencils of conics everywhere tangent to a line and a cubic.}

 We have shown in Part I  that, given a  cubic $C_1$ which is smooth, or just not of additive type, 
 and a line $X_1$ not contained in $C_1$, 
 there is a quadratic pencil of conics $Q_{
 \la}$ whichAssumption \ref{**}
are everywhere tangent to $C_1$, and also to  $X_1$, and satisfy Assumption \ref{**}. 

Our goal here is to give somehow explicit equations for the variety $\sT$ of such triples $(C_1, X_1, \{ Q_{
 \la}\})$, at least from the birational point of view.
 
 Our main Theorem \ref{main} says that this variety $\sT$  is a covering of degree $12$ of the variety $\sB$
 of the reduced quartics $ B = C_1 \cup X_1$.
 
 The interesting fact that we shall now show is that the other two projections, of $\sT$ to the variety $\sP_L$ of
 pairs $(X_1,  \{ Q_{\la}\})$ satisfying the tangency condition of Assumption \ref{**}, as well as the projection onto 
 the variety $\sP_{C_1}$ of
 pairs $(C_1,  \{ Q_{\la}\})$ satisfying the tangency condition of Assumption \ref{**}, are both birational.
 
 These birationality statements are proven in Proposition \ref{residual-cubic}, respectively in Theorem \ref{weierstrass}.
 
 Proving this birationality statement will amount to giving explicit equations, in the  former case  for the cubic $C_1$ in terms of  $(X_1,  \{ Q_{\la}\})$, in the latter case 
  for the line $X_1$ in terms of $(C_1,  \{ Q_{\la}\})$.
 
 This will be done in the next  sections using the theory of envelopes, that is, curves tangent to all curves in a given 1-parameter family.
 
 In between, we need also somehow to describe our variety $\sT$, showing how the pencils 
 $ \{ Q_{\la}\}$, which   are indeed conics in the five-dimensional projective space $\PP^5$ of plane conics,
 are associated to a cubic and a nontrivial 2-torsion line bundle $\eta$ on it.

We  show  in the next section  how, fixing a line $X_1$ and 3 distinct points in  it, there is a 6-dimensional family of cubics $C_1$ passing through the 3 points, obtained each as an everywhere tangent cubic 
to  a quadratic pencil of conics  tangent 
to $X_1$.

   \section{Explicit equations  of 
special everywhere tangent pencils of conics starting  from 
 a line $X_1$ and   three  points on it: the pencil determines the cubic curve $C_1$.}

In this section, we shall fix $X_1$ as the line $x_1=0$, and the three points $X_1 \cap C_1$: we shall see that 
Assumption \ref{**}
implies that the equation of $C_1$ can be written as 

\begin{equation}\label{cubic}
C_1  =  \{ x|   [ 2 M_2(x)  x_2^2 +2  M_3(x)x_3^2 + 4 x_1M_2(x) M_3(x) + 2 x_2 x_3  M_1(x)- x_1M_1(x)^2]=0\},
\end{equation}
where $M_1, M_2, M_3$ are linear forms.

    Take the  coordinates $x = (x_1, x_2,x_3)$ on the plane previously considered in Lemma \ref{quadratic-pencil}: then a point $\la \in X_1$
    has coordinates $(0, \la_2, \la_3)$, and the conic $Q_\la$ has  equation of the form
    $$ Q_\la = (\la_3 x_2 -  \la_2 x_3)^2 + 2 x_1 (\la_2^2 M_2(x) + \la_3^2 M_3(x)+ \la_2 \la_3 M_1(x) )=0,$$ 
    where $M_j = \sum_i m_j^ix_i$ is a linear form.

    We have a map of $\PP^1$ with coordinate $\la$ to the projective space $|H + \eta|$ (this system
    yields another embedding of $C_1$ in the plane); $\la \mapsto \de_\la$, and $2 \de_\la \in |2H|$,
    where $|2H|$ is the space of conics in $\PP^2$. Therefore, the composite being quadratic, the map
    $\la \mapsto \de_\la$ is linear, and we have a linear pencil in $|H + \eta|$.

   Hence we get the desired 1-parameter family of quadratic forms $Q_\la$.

\begin{proposition}\label{residual-cubic}

There is a cubic $C_1$ such that $C_1$ is everywhere tangent to each  $Q_\la$  (that is, tangent at each intersection point) .
\end{proposition}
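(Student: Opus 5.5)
The plan is to use the classical theory of envelopes. The pencil $Q_\la$ is a one-parameter family of conics depending quadratically on $\la=(\la_2,\la_3)$. Its envelope is therefore given by the discriminant of $Q_\la(x)$, viewed as a binary quadratic form in $(\la_2,\la_3)$.

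We expand
$$ Q_\la = \la_2^2\,(x_3^2 + 2x_1M_2) + \la_3^2\,(x_2^2+2x_1M_3) + \la_2\la_3\,(-2x_2x_3 + 2x_1M_1) =: A\la_2^2 + B\la_3^2 + 2C\la_2\la_3 .$$
Here $A,B,C$ are quadratic forms in $x$. A point $x$ lies on the envelope exactly when the equation $Q_\la(x)=0$ in $\la$ has a double root, that is, when $\Delta(x):=AB-C^2=0$.

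Computing directly,
$$ AB - C^2 = (x_3^2+2x_1M_2)(x_2^2+2x_1M_3) - (x_1M_1 - x_2x_3)^2 .$$
The quartic terms $x_2^2x_3^2$ cancel, so everything that remains is divisible by $x_1$:
$$ \Delta = x_1\bigl[2M_2x_2^2 + 2M_3x_3^2 + 4x_1M_2M_3 + 2x_2x_3M_1 - x_1M_1^2\bigr].$$
So the envelope of the family is the line $X_1$ together with the cubic $C_1$ of \eqref{cubic}. This is the candidate cubic.

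Next we must show that $C_1$ is tangent to each $Q_\la$ at every intersection point. Take $p\in Q_{\la_0}\cap C_1$ with $p$ not on $X_1$. Since $\Delta(p)=0$, the binary form $\la\mapsto Q_\la(p)$ has the double root $\la_0$. Hence $Q_{\la_0}(p)=0$ and $\partial_\la Q_\la(p)|_{\la_0}=0$, which are the standard envelope equations.

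For tangency we differentiate the identity $Q_{\la(x)}(x)=0$ along $C_1$, where $\la(x)$ is the double root, locally a function of $x$. Using $\partial_\la Q=0$, we get $d_xQ_{\la_0}(p)\cdot v=0$ for every tangent vector $v$ to $C_1$ at $p$. Thus the tangent line of $C_1$ at $p$ is contained in the tangent line of $Q_{\la_0}$ at $p$, whenever both curves are smooth at $p$.

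A cleaner algebraic way to say this: on $C_1$ we have $AB=C^2$, so restricted to $C_1$,
$$ Q_\la = \frac{(A\la_2 + C\la_3)^2}{A}. $$
This exhibits $Q_\la|_{C_1}$ as a square up to a unit, with $A\la_2+C\la_3$ playing the role of the section $\s_\la$. To make this rigorous I would argue as in Part I: $\sqrt{A|_{C_1}}$ defines a section of a line bundle $\hol(H+\eta)$ on $C_1$, possibly after allowing a 2-torsion twist. Then $Q_\la|_{C_1}$ is twice the divisor of the section $(A\la_2+C\la_3)/\sqrt{A}$, and intersection multiplicities are even, which is exactly everywhere tangency.

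The main obstacle is the degenerate points. These are the points of $C_1\cap X_1$, points where $A$ vanishes on $C_1$ together with $C$, and singular points of $Q_\la$ or $C_1$. There the envelope argument only shows contact, not tangency with full even multiplicity.

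For these points I would first check the divisor count. The total intersection is $Q_\la\cdot C_1=6$. The zeros of $A$ on $C_1$ are points where $A=C=0$, and at each of them the multiplicity of $A|_{C_1}$ must be even. This holds because $A|_{C_1}\cdot B|_{C_1}=C|_{C_1}^2$, combined with a symmetric argument using $B$, since $A$ and $B$ have no common zero on $C_1$ generically. Evenness then holds for general data, and it extends to all data of the family by closedness of the tangency condition.
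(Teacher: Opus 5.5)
Your proposal is correct. Its first half is exactly the paper's proof: regard $Q_\la$ as a binary quadratic form in $\la$, take its discriminant, note that the $x_2^2x_3^2$ terms cancel, and read off $\sE = X_1\cup C_1$ with $C_1$ as in \eqref{cubic}.

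You differ from the paper in how tangency is justified. The paper appeals to its general Proposition \ref{envelope}: $\sQ\to\PP^2$ is a double cover branched on $\sE$, and at a smooth point of $\sE$ the ramification has local form $(u,w)\mapsto(u,w^2)$. So the differential of the projection has image the tangent line of $\sE$, and at singular points of $\sE$ contact is simply declared automatic. Your implicit differentiation is the classical analytic version of the same fact.

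Your identity $A\,Q_\la\equiv(A\la_2+C\la_3)^2$ modulo the cubic, with its twin $B\,Q_\la\equiv(C\la_2+B\la_3)^2$, is a genuinely different tool. It only applies to families quadratic in $\la$, whereas the paper's proposition covers arbitrary one-parameter families of plane curves. In exchange, it shows that $Q_\la|_{C_1}$ is locally a unit times a square. Hence every local intersection multiplicity is even and $Q_\la\cdot C_1=2\de_\la$, which is the form of tangency actually used in Assumption \ref{**}. It also covers the points of $X_1\cap C_1$. These are singular points of $\sE$, where Proposition \ref{envelope} only guarantees contact with $\sE$, not with the component $C_1$.

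Your list of degenerate points is too long, though. On $X_1$ one has $A=x_3^2$ and $B=x_2^2$, which never vanish simultaneously. So at $X_1\cap C_1$ one of $A,B$ is a local unit on $C_1$ and the double root $\la(x)=(-C:A)=(B:-C)$ is holomorphic; both of your arguments go through there unchanged. The same holds at singular points of $Q_\la$ or $C_1$, since $\dim\hol_{C_1,p}/(g^2)=2\dim\hol_{C_1,p}/(g)$ for a non-zerodivisor $g$.

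The only genuinely degenerate points are base points of the pencil, where $A=B=C=0$. These lie off $X_1$, and $AB-C^2$ vanishes there to order at least two, so $C_1$ is singular there. Contact is therefore automatic, and your closedness argument is needed only for evenness at such points.

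Finally, you and the paper both tacitly use that the bracket in \eqref{cubic} is not identically zero. This holds: since $A|_{X_1}$ and $B|_{X_1}$ are not proportional, $AB=C^2$ in $\CC[x]$ would force every $Q_\la$ to be a double line, which Assumption \ref{**} excludes.
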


\begin{proof}
We use  here the theory of envelopes.

\smallskip

We have
 $$ Q_\la = \la_3^2 (x_2^2 +2 x_1 M_3(x))  +  \la_2 \la_3 (-2 x_2 x_3  +2 x_1M_1(x)) +
       \la_2^2 (x_3^2 + 2 x_1 M_2(x)).$$
      
      And we consider now the surface $$\sQ  = \{ (\la, x) | Q_\la (x)=0\} \subset \PP^1 \times \PP^2.$$ 
      
      $\sQ \ra \PP^2$ is a double covering with  branch curve   the  discriminant quartic curve  $\sE$, called  the 
      envelope:
      
      $$\sE : =  \{ x| (x_2^2 +2 x_1 M_3(x))(x_3^2 + 2 x_1 M_2(x)) - ( x_2 x_3  - x_1M_1(x))^2=0 \} \Leftrightarrow$$ 
$$\sE : =  \{ x|  x_1 [ 2 M_2(x)  x_2^2 +2  M_3(x)x_3^2 + 4 x_1M_2(x) M_3(x) + 2 x_2 x_3  M_1(x)- x_1M_1(x)^2]=0 \} .$$ 

We see immediately that $\sE$ consists of the line $X_1$ and of a cubic $C_1$, 
of equation \eqref{cubic} as announced, which is everywhere tangent to
each conic $ Q_\la $, in view of the following Proposition.

\end{proof}

\begin{proposition}\label{envelope}
Consider a 1-parameter family of  plane curves $F_\la =0$, for $\la \in \PP^1$,  (i.e., $F_\la (x) = F (\la, x)$) of degree $d$
 such that the general curve is reduced:
 then the {\bf envelope} $\sE$ of the family of curves,
defined as the branch divisor of the second projection $p_2 : \sF \ra \PP^2$,
$$    \sF   : = \{ (\la, x) | F(\la, x)=0\} \subset \PP^1 \times  \PP^2,$$
is  everywhere tangent to the curves $F_\la$ in the family (more precisely, at each smooth point $x \in \sE$  there is a curve $F_\la$ tangent to $\sE$ at $x$).
\end{proposition}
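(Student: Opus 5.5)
We work locally on the incidence surface $\sF = \{F(\la,x)=0\} \subset \PP^1 \times \PP^2$ and compute with implicit functions. The branch divisor of $p_2 \colon \sF \to \PP^2$ is the image of the ramification locus. On the smooth part of $\sF$, ramification means that the fibre of $p_2$ is tangent to the vertical direction $\partial/\partial\la$. In an affine chart $\la\in\CC$, $x\in\CC^2$, this is the locus $R=\{F=0,\ \partial F/\partial\la=0\}$. Eliminating $\la$ from the two equations gives the classical discriminant description of the envelope, which matches the computation in the proof of Proposition \ref{residual-cubic}.

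The key step is the standard envelope argument. Let $x\in\sE$ be a smooth point of $\sE$, and assume $x$ is the image of a point $(\la_0,x)\in R$ at which $\sF$ is smooth and $\nabla_x F(\la_0,x)\neq 0$, so that $x$ is a smooth point of the curve $F_{\la_0}$. Near this point parametrize $R$, or rather its image $\sE$, locally by a holomorphic arc $t\mapsto(\la(t),x(t))$ with $x(t)$ a local parametrization of $\sE$ and $\la(t_0)=\la_0$. Differentiating $F(\la(t),x(t))\equiv 0$ gives
$$\frac{\partial F}{\partial\la}\,\la'(t)+\nabla_xF\cdot x'(t)=0 .$$
Since $\partial F/\partial\la=0$ along $R$, we get $\nabla_xF(\la_0,x)\cdot x'(t_0)=0$. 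Hence the tangent line of $\sE$ at $x$ coincides with the tangent line of $F_{\la_0}$ at $x$, which is the required tangency.

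The remaining work is to handle degenerate situations. First, if $\nabla_xF(\la_0,x)=0$, then $x$ is singular on $F_{\la_0}$. In that case $F_{\la_0}$ passes through $x$ with multiplicity at least $2$, so the local intersection multiplicity with $\sE$ is at least $2$, and we regard this as tangency in the weak sense. Second, when $R$ maps to $\sE$ with some multiplicity, or $\la(t)$ is not a local function of $x$, we pass to the normalization of the relevant component of $R$. The computation above runs along any local branch through a point over $x$, and it uses only that $F_\la$ is reduced for general $\la$. That hypothesis guarantees that $R$ is a proper curve and that $p_2$ is finite and generically étale of degree $d$ in $\la$, so that $\sE$ is genuinely a divisor.

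I expect the main obstacle to be making precise that every smooth point of $\sE$ lies under a ramification point where the argument applies. Components of $\sE$ can also arise from singular points of $\sF$, or from base points where $F_\la(x)=0$ for all $\la$, and there the fibre of $p_2$ is not finite. I would handle these by noting that base points are finite in number, and at singular points of $\sF$ the Jacobian criterion forces $\nabla_xF=0$, which falls into the multiplicity case above. This suffices for the application in Proposition \ref{residual-cubic}.
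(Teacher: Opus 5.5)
Your proposal is correct and is essentially the paper's argument. Both reduce to a ramification point $(\la_0,x)$ over a smooth point $x\in\sE$, dismiss the case where $F_{\la_0}$ is singular at $x$ (which also covers singular points of $\sF$), and conclude by a first-order computation at a smooth point of $\sF$. You do this via the chain rule along an arc of $R$ using $\partial F/\partial\la=0$; the paper instead uses the normal form $(u,w)\mapsto(u,w^2)$ of simple ramification to see that $dp_2$ has rank one with image the tangent line to $\sE$, which must then contain the tangent to $F_{\la_0}$.

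The lifting issue you flag is settled as in the paper. A point of multiplicity one of the branch divisor $p_{2*}R$ has exactly one preimage in $R$, and there $R\to\sE$ is a local isomorphism, so your arc has $x'(t_0)\neq 0$. Incidentally, $\deg p_2$ is the degree of $F$ in $\la$, not $d$.
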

\begin{proof}
Since the general curve is  reduced, $\sF  $ is also reduced.

By adjunction, if $F$ has bidegree $(m,d)$, then  the canonical divisor $K_\sF = \hol_\sF(m-2, d-3)$
and,  by Hurwitz' formula, the ramification divisor $R$ satisfies $\hol_\sF (R) = \hol_\sF(m-2, d)$,
whence the branch locus $\sB$ has 
$$ degree (\sB ) = \sB \cdot H = (m-2, d)\cdot  (m, d)\cdot  (0,1) = d (2m-2).$$

The condition of tangency is  automatic for a curve $F_{\la}$ passing through a  singular point  of $\sB$,
 while for the smooth points $
\xi$ of $\sB$ 
there is exactly one point $(\la',\xi) \in R$ mapping to $x$  (otherwise the multiplicity of  $\sB$ 
at $\xi$ would be at least 2!). And there is nothing to prove if $F_{\la'}$ is singular at $\xi$.

So, we assume that $\xi$ is a smooth point of $F_{\la'}$   and of $\sB$, hence $(\la',\xi)$ is a smooth point of $R$.

Since $R$ is defined by the equations (in affine coordinates)
$$  F(\la, x)=0 , \ \frac{\partial F}{\partial \la} (\la, x)=0,$$
$(\la',\xi)$ is a smooth point of $\sF$.

At  $\xi $ there are local coordinates $(u,v)$ for $\PP^2$, such that $\sB$ equals $ v=0$,
and local coordinates $(u,w)$ on $\sF$  at $(\la',\xi)$ such that locally the map has the form $(u,w) \mapsto (u, w^2)$
 (simple ramification follows again from the assumption that $\xi$ is a smooth point of the branch divisor).

 Hence the derivative of the second projection $ p_2 : \sF \ra \PP^2$ has rank equal to 1 at the point
 $(\la',\xi)$, and its image is the 1-dimensional space generated by the tangent to $\sB$.
 
 Therefore, also the tangent of $F_{\la'}$ is a multiple of the tangent to $\sB$ at $\xi$,
 and our assertion is proven.

\end{proof}

\begin{remark}
 If the curves $F_{\la}$ are all reduced, and the general one is smooth, then $\sF$ is normal.
If all the curves  $F_{\la}$ are singular, their singular points, by Bertini's theorem, yield curves 
inside $ Sing (\sF)$, whose image in $\sB$ have multiplicities $ \geq 2$.

\end{remark}

\subsection{Explicit calculations}
 To do more precise calculations, let us   assume that the three points of intersection of $X_1$ with $C_1$ are 
 $$(0,0,1), (0,1,0), (0,1,1).$$
    
    Hence $Q_\la$ should have  vanishing determinant exactly for $\la_2=0, \la_3=0, \la_2=\la_3$,
    and exactly rank $2$.
    
     For $\la_2=0$ we get from the rank condition that $M_3 (x)= M_{3,2} x_2 + M_{3,1} x_1$,
     and $2 M_{3,1} \neq M_{3,2}^2$. Then $Q_{0,1} = x_2^2 - 2 M_{3,2} x_2 x_1 + 2 M_{3,1} x_1^2=0.$
     
     For  $\la_3=0$ we get from the rank condition that $M_2 (x)= M_{2,3} x_3 + M_{2,1} x_1$,
     and $2 M_{2,1} \neq M_{2,3}^2$. Then $Q_{1,0} = x_3^2 - 2 M_{2,3} x_3 x_1 + 2 M_{2,1} x_1^2=0.$
     
     For $\la_2= \la_3=1$ we get the form
     $$  ( x_2 -   x_3)^2 +  2 x_1 ( M_2(x) +  M_3(x)+ M_1(x) ) = : ( x_2 -   x_3)^2 +  2 x_1 N(x),$$
     and $$N_{2} + N_{3} =0 \Leftrightarrow M_{3,2} + M_{2,3} + M_{1,2}+ M_{1,3}=0,$$
     and $N_1 \neq N_2^2  \Leftrightarrow 2 (M_{2,1} + M_{3,1} + M_{1,1}) \neq (M_{3,2} + M_{1,2})^2$.
     
     Then $Q_{1,1} = (x_2 -  x_3)^2 +  2 N_2   x_1 (x_2 -  x_3) + 2  N_1  x_1^2=0.$

     Hence 
     
      $$ Q_\la = (\la_3 x_2 -  \la_2 x_3)^2 + $$
      $$ +  2 x_1  \big(  \la_2^2 (M_{2,3} x_3 + M_{2,1} x_1)  + \la_3^2 (M_{3,2} x_2 + M_{3,1} x_1) + \la_2 \la_3 (M_{1,3} x_3 + M_{1,2} x_2 + M_{1,1} x_1 )  \big)  .$$
     
    \begin{remark}
    As a verification, we 
    calculate  the determinant of the conic $ Q_\la$, which  we know  a priori  to  be a   degree 6 polynomial with exactly $3$ roots, 
    hence it should  have  some double root.

    We calculate the determinant of the following matrix
    
    \[
		A=\begin{pmatrix}
			\la_3^2 &  - \la_2 \la_3 & \la_3 [\la_3   M_{3,2} + \la_2 M_{1,2}]\\
			- \la_2 \la_3 & \la_2^2  &  \la_2 [[ M_{2,3} + \la_3 M_{1,3}]] \\
			\la_3 [\la_3   M_{3,2} + \la_2 M_{1,2}] &  \la_2 [[  \la_2 M_{2,3} + \la_3 M_{1,3}]] & 2 ( \la_2^2 M_{2,1} +  \la_3^2 M_{3,1} +  \la_2 \la_3  M_{1,1})\\
		\end{pmatrix}
		\]	
    
 Dividing the first row and column by $\la_3$, the second  row and column by $\la_2$,  
 we are left with the matrix
     \[
		B=\begin{pmatrix}
			1 &  - 1&  [\la_3   M_{3,2} + \la_2 M_{1,2}]\\
			- 1  &  1  &  [[ \la_2 M_{2,3} + \la_3 M_{1,3}]] \\
			 [\la_3   M_{3,2} + \la_2 M_{1,2}] &   [[ \la_2 M_{2,3} + \la_3 M_{1,3}]] & 2 ( \la_2^2 M_{2,1} +  \la_3^2 M_{3,1} +  \la_2 \la_3  M_{1,1})\\
		\end{pmatrix}
		\]
		
		Subtracting multiples of the first column, we obtain the matrix
		   \[
		A' =\begin{pmatrix}
			1 &  0 &  0\\
			- 1  &  0  & [\dots ] +  [[ \dots ]] \\
			 [\la_3   M_{3,2} + \la_2 M_{1,2}] &  [\la_3   M_{3,2} + \la_2 M_{1,2}] +  [[ \la_2 M_{2,3} + \la_3 M_{1,3}]] & 2 (\dots)-[\dots]^2		\end{pmatrix}
		\]    
  hence the determinant of $A$ equals 
  $$ det (A) = - \la_2^2 \la_3^2 ( [\dots ] +  [[ \dots ]])^2 =  - \la_2^2 \la_3^2 (\la_3 (M_{3,2} + M_{1,3}) + \la_2 (M_{1,2} + M_{2,3} ) )=$$
    
$$ = - \la_2^2 \la_3^2  (\la_2- \la_3 )^2 (M_{3,2} + M_{1,3}) ^2 .$$

The further condition $(M_{3,2} + M_{1,3})  \neq 0$ ensures that the determinant is not identically  zero.
 
\end{remark}
 
 \bigskip
 
 Theorem \ref{main} shows that via  the above construction, depending on 6 parameters,  we get all the possible  cubic curves
passing through the given 3 points in $X_1$.

\section{Quadratic pencils of conics everywhere tangent to a  plane cubic which is smooth (or just not of additive type)}

In this section we take up again our discussion, beginning now with any  plane cubic $C_1 $ smooth, or  not of additive type, embedded in $\PP^2$ by the complete linear system $|H|$: we want to show that a pencil of conics everywhere tangent to $C_1$ is necessarily also
tangent to a fixed line in the plane.

 We use again the theory of envelopes of 1-parameter families of plane curves, as done in  Proposition \ref{envelope}.

\begin{lemma}\label{line-envelope}
Given a quadratic  pencil $\Lambda$ of conics, that is,  a quadratic 1-parameter  family of conics in the plane,
$$ Q_\la = \{ x | \sum_{i,j} \la_i \la_j Q_{i,j} (x) = 0\}, \ \la \in \PP^1,$$

assume that there is a cubic $C_1$ such that the conics of the pencil are everywhere tangent to $C_1$:
then there is a line $X_1$ such that each $Q_\la $ is tangent to $X_1$.

\end{lemma}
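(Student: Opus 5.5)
We follow the envelope method of Proposition \ref{residual-cubic}, now in the reverse direction. Write $Q_\la = \la_1^2 A(x) + 2\la_1\la_2 B(x) + \la_2^2 C(x)$, with $A,B,C$ quadratic forms, and consider the incidence surface
$$\sQ = \{(\la,x) \mid Q_\la(x)=0\} \subset \PP^1\times\PP^2 .$$
It has bidegree $(m,d)=(2,2)$, so the computation in the proof of Proposition \ref{envelope} shows that the branch divisor of $p_2\colon \sQ \ra \PP^2$ has degree $d(2m-2)=4$. Explicitly, the envelope is the discriminant quartic
$$\sE=\{AC-B^2=0\}.$$
By Proposition \ref{envelope}, every $Q_\la$ is everywhere tangent to $\sE$. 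The plan is to show that $C_1$ is a component of $\sE$. The residual component is then a line $X_1$, also everywhere tangent to each $Q_\la$, which means that $X_1$ is tangent to each $Q_\la$.

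\textbf{Step 1: $C_1 \subset \sE$.} Take a general point $\xi\in C_1$. Since $\xi$ lies on at least one conic of the family (the conics of a 1-parameter family cover the plane when the family is not degenerate), pick $\la$ with $\xi \in Q_\la$. By hypothesis $Q_\la$ is tangent to $C_1$ at $\xi$. Now vary $\xi$ along $C_1$. Near $\xi$, the points of $C_1\cap Q_{\la'}$ for $\la'$ near $\la$ come in pairs (every intersection has even multiplicity) and move continuously. Hence at the parameter $\la$ the fibre of $p_2$ over $\xi$ must be non-reduced: the two roots $\la'$ of $Q_{\la'}(\xi)=0$ coincide. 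Concretely, differentiating $Q_\la(\gamma(t))=0$ along a local parametrization $\gamma(t)$ of $C_1$, with $\la=\la(t)$, and using tangency $\nabla_x Q_\la\cdot\gamma'=0$, we get $\partial_\la Q_\la(\gamma(t))\,\la'(t)=0$. We must rule out $\la(t)$ locally constant. If it were, a single conic would contain an arc of $C_1$, impossible for an irreducible cubic, and for reducible $C_1$ we handle the line or conic components separately. So $\partial_\la Q=0$ along $C_1$, that is, $\xi\in\sE$.

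\textbf{Step 2: the residual line.} By Step 1, $\sE = C_1 + X_1$ for a line $X_1$, where $\sE\neq 0$ because the general conic is reduced and not all conics coincide. At a general point $\eta \in X_1$ (smooth on $\sE$, off $C_1$), Proposition \ref{envelope} gives a $\la$ with $Q_\la$ tangent to $X_1$ at $\eta$, namely the double root of the discriminant. So the map $\eta \mapsto \la(\eta)$ is defined on $X_1$. We then need that \emph{every} $Q_\la$ is tangent to $X_1$, i.e. that this map is dominant onto $\PP^1$. Since $Q_\la\cdot X_1$ consists of $2$ points, each conic that is tangent to $X_1$ is tangent at only one point. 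So if the map $\eta\mapsto\la(\eta)$ were constant, a single conic would be tangent to $X_1$ at infinitely many points, forcing $X_1 \subset Q_{\la_0}$. Then $Q_{\la_0}$ would be singular at a general point of $X_1$, i.e. it would be $X_1$ doubled, which is excluded. Hence the map is surjective, and tangency holds for all $\la$ by closedness.

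\textbf{Main obstacle.} The delicate point is Step 1 in degenerate situations: the family might be such that $\la(t)$ is constant on a component of $C_1$ (when $C_1$ contains a line or conic lying in some $Q_\la$), or the discriminant might vanish identically. We would first handle the case where $C_1$ is irreducible and the general conic is smooth, which is all that the birationality statement needs. The remaining cases would be treated by checking multiplicities of the components of $\sE$ directly.
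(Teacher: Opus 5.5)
\textbf{Step 2 has a genuine gap.} Your Step 1 is fine for irreducible $C_1$: differentiating along $C_1$ is a valid local substitute for the paper's claim that only one $Q_\la$ passes through a non-base point of $C_1$, i.e. that $C_1\subset\sE$. The problem is how Step 2 disposes of the case where $\eta\mapsto\la(\eta)$ is constant on $X_1$.

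Ramification of $p_2$ at $(\la_0,\eta)$ for all $\eta\in X_1$ only gives $Q_{\la_0}(\eta)=\partial_\la Q(\la_0,\eta)=0$. With $\la_0=(1{:}0)$, this says $x_1$ divides both $A$ and $B$. It does not make $Q_{\la_0}$ singular along $X_1$: a line pair $X_1+L'$ is smooth at the general point of $X_1$ and is trivially tangent to it there.

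This case actually occurs under the lemma's hypotheses. Take
\[
Q_\la=\la_1^2\,x_1x_3+2\la_1\la_2\,x_1(x_2+x_3)+\la_2^2\,(x_1^2+x_1x_2+x_2^2).
\]
Its discriminant is $x_1F$ with $F=x_1(x_2+x_3)^2-x_3(x_1^2+x_1x_2+x_2^2)$, and $C_1=\{F=0\}$ is a smooth cubic. On $C_1$ one has $x_3Q_\la=x_1\bigl(\la_1x_3+\la_2(x_2+x_3)\bigr)^2$. Hence $Q_\la\cdot C_1=2\bigl((0{:}0{:}1)+r_\la+s_\la\bigr)$, where $r_\la,s_\la$ are the residual points of that line through $(1{:}0{:}0)$. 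So every $Q_\la$ is everywhere tangent to $C_1$, and the general $Q_\la$ is smooth. Yet $Q_\la|_{X_1}=\la_2^2x_2^2$, so every $\eta\in X_1$ gives $\la(\eta)=(1{:}0)$, and $Q_{(1:0)}=\{x_1x_3=0\}$ is a reduced line pair.

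The lemma still holds in this example, since all conics touch $X_1$ at the base point $(0{:}0{:}1)$. But surjectivity of $\eta\mapsto\la(\eta)$ is false, so your route cannot be completed as written. The paper's own proof also stops at ``each $y\in X_1$ is a tangency point of some $Q_{\la_y}$'', so you were right to flag this step; it is your argument for it that fails.

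\textbf{Repair.} Work with the restriction to $X_1$ directly. Since $X_1\subset\sE$, the binary quadratic form $\la\mapsto Q_\la|_{X_1}$ has identically vanishing discriminant. By unique factorization there are two possibilities.
\begin{itemize}
\item $Q_\la|_{X_1}=(\la_1\alpha+\la_2\beta)^2$ with $\alpha,\beta$ linear forms on $X_1$. Then every $Q_\la$ is tangent to $X_1$ or contains it. This includes proportional $\alpha,\beta$, which is the example above.
\item $Q_\la|_{X_1}=\ell(\la)^2q$ with $q$ having two distinct roots.
\end{itemize}
Only the second case must be excluded, and the absence of double lines does not exclude it; you must use the tangency to $C_1$.

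Normalize $\ell(\la)=\la_2$. Then $A=x_1A'$, $B=x_1B'$, and $C_1=\{x_1B'^2-A'C=0\}$, so the two base points $\{x_1=C=0\}$ lie on $C_1$. At one of them $(A',B')\neq(0,0)$. Otherwise $A',B'$ would both be multiples of $x_1$, and $\sE$ would contain $X_1$ doubly, leaving no room for $C_1$. At that point $C_1$ is smooth, since $\nabla x_1$ and $\nabla C$ are independent there. The gradient of $Q_\la$ there is $(\la_1^2A'+2\la_1\la_2B')\nabla x_1+\la_2^2\nabla C$, so its tangent line moves with $\la$. Hence the general $Q_\la$ meets $C_1$ transversally at that point, contradicting everywhere tangency.
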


\begin{proof}
The conics of the pencil, taken  together,  determine a surface $\sQ \subset \PP^1 \times \PP^2$, a double covering branched on the discriminant 
quartic curve $$\sE : = \{ Q_{1,2}^2 (x)  - 4 Q_{1,1} (x) Q_{2,2} (x) =0\},$$
classically called the envelope of the family of conics, and everywhere  tangent to some conic of the family.

Indeed, we see immediately that for each point $P$ of $C_1$ there is only one conic $ Q_\la $ passing through it,
unless the pencil $\Lambda$ has $P$ as a base point (and in this case, $P$ is unique). We have therefore shown that
 $C_1 \subset \sE$, and we can write $\sE = C_1 \cup X_1$, where $X_1$ is a line. And for each point $ y \in X_1$,
 there is  one  $\la_y$  such that  $y \in Q_{\la_y}$. The second projection $\sQ \ra \PP^2$ ramifies
 at $(\la_y, y)$, hence the curve $Q_{\la_y} $ is tangent to $X_1$ at $y$, as we saw in Proposition \ref{envelope}.
 
\end{proof}

We add in the next section some considerations about the geometry of smooth  cubic curves  $C_1$ in the plane.

\section{Rank $2$  conics tangent to  a smooth plane cubic $C_1$}

If $C_1$ is a smooth cubic, there are $9$ flex points $P \in C_1$, the points such that   $ 3 P \equiv H$, and we may fix one of them as the origin, let us call it   $O$.

Assume for the rest of  this section  that  $Q$ is a conic such that $ Q \cdot C_1 = 2 \de$, where $\de$ is an effective divisor of degree $3$.

There are three possible cases:

1) If $Q$ has rank $1$, then $Q = 2 H_0$, where $H_0 \in |H|$, and $\de = div(H_0)$.

3) If $Q$ has rank $3$, then $ \de \equiv H + \eta$, where $ \eta \in Pic^0(C_1) $  has torsion order exactly $2$.

2)  If $Q$ has instead rank $2$, then again $ \de \equiv H + \eta$, but we can be more precise.

Write $Q = H_1 + H_2$, and letting $\de_j$ be the divisor cut by $H_j$ on the curve $C_1$,
we see that necessarily $ P_Q : = H_1 \cap H_2 \in C_1$. Because otherwise the supports of the two divisors 
$\de_1, \de_2$ would be  disjoint and $\de_1 + \de_2 = 2 \de$ is not possible.

Hence   the line $H_j$ cuts on $C_1$ a divisor  $ \de_j = P_Q  + 2 P_Q(j)$, so $H_j$  is tangent to $C_1$ at the point $P_Q(j)$ (clearly $P_Q(1) \neq P_Q(2)$). 

Then  the divisor $\de$ is linearly equivalent to $ P_Q + P_Q(1) + P_Q(2).$

In the situation we are interested in,  there are conics $Q_{\la}$ of rank $2$ in the pencil, and for this reason  we would like 
to describe in  more  detail the latter case 2).

\begin{theorem}
Let $C_1$ be  a smooth cubic curve, embedded by the linear system $H$ as $C_1 \subset |H|^{\vee} \cong \PP^2$.
Then the variety $\sC \subset |2H|$ of rank $2$ conics tangent to $C_1$ consists of three disjoint smooth
curves $\sC_{\eta}$ of degree $6$ and genus $1$, each corresponding to a nontrivial divisor class $ \eta \in Pic^0(C_1) $  with torsion order exactly $2$.

Moreover $\sC_{\eta}$ is isomorphic to a cubic curve in $|H+\eta|$, isomorphic to $C_1 / \eta$, and is embedded in $|2H|$ via the Veronese embedding
of $|H + \eta|$  inside $v_2(|H + \eta|) \subset |2H|$.
\end{theorem}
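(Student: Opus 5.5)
We parametrize the rank 2 tangent conics by the point $P_Q$ and use the group law on $C_1$ with a flex $O$ as origin, so that three points are collinear iff their sum is $0$. By the discussion of case 2) above, a rank 2 conic $Q = H_1 + H_2$ with $Q\cdot C_1 = 2\de$ has $P_Q = H_1\cap H_2 \in C_1$, and each $H_j$ cuts $P_Q + 2P_Q(j)$. Hence $2P_Q(j) = -P_Q$ in the group. Given $P := P_Q$, the solutions $R$ of $2R = -P$ form a coset of $Pic^0(C_1)_2$, i.e.\ four points $R_0, R_0+\eta_1, R_0+\eta_2, R_0+\eta_3$. An unordered pair $\{P_Q(1),P_Q(2)\}$ of distinct solutions therefore determines a nontrivial $\eta = P_Q(1)-P_Q(2)$, and for each $\eta$ there are exactly two such pairs. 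Conversely, any such pair gives the two distinct tangent lines through $P$ (the tangent at $R$ passes through $-2R = P$), hence a rank 2 conic tangent to $C_1$. For each $\eta$, $\de = P + P_Q(1)+P_Q(2) \equiv H + \eta$ after checking with the group law, so the invariant $\eta$ of $Q$ agrees with the $\eta$ of case 3). This splits $\sC$ into the three disjoint pieces $\sC_\eta$, indexed by nontrivial $\eta$ (they are disjoint since $\de$, hence $\eta$, is determined by $Q$).

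Next we identify $\sC_\eta$ abstractly. Consider the map $C_1 \to \sC_\eta$ sending $R$ to the conic given by the tangent lines at $R$ and $R+\eta$. Both lines pass through $-2R = -2(R+\eta)$, so this is well defined. It is invariant exactly under $R\mapsto R+\eta$, and it hits every point of $\sC_\eta$: the map $R\mapsto P=-2R$ is $4:1$, and each fibre splits into two $\eta$-orbits, matching the two pairs. Hence $\sC_\eta \cong C_1/\langle\eta\rangle$ bijectively, which is an elliptic curve. Smoothness of $\sC_\eta$ will follow once we exhibit the map as the restriction of an embedding (next step).

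For the embedding, to $Q\in\sC_\eta$ attach $\de_Q \in |H+\eta|\cong\PP^2$, the unique divisor with $2\de_Q = Q\cdot C_1$. Since $H^0(\hol_{C_1}(2H)) = H^0(\hol_{\PP^2}(2))$, the squaring map $|H+\eta| \to |2H|$, $\de\mapsto 2\de$, is (projectively) the Veronese map $v_2$, and it sends $\de_Q$ to $Q$. So $\sC_\eta = v_2(\Gamma_\eta)$, where $\Gamma_\eta = \{\de_Q\}\subset |H+\eta|$.

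It then remains to show that $\Gamma_\eta$ is a smooth cubic isomorphic to $C_1/\eta$; this is the main obstacle. The plan is as follows. The curve $R \mapsto \de = -2R + R + (R+\eta) = \eta - R$ on $C_1$ — up to a translation this is just the embedding of $C_1$ by $|H+\eta|$ composed with a translation — does not factor through the quotient directly. So instead describe $\Gamma_\eta$ as the locus of divisors in $|H+\eta|$ of the form $P + R + (R+\eta)$, which is invariant under $R \mapsto R+\eta$. Equivalently, the element $\de - (R + (R+\eta))$ ... we compute the degree via the norm/pullback: the map $C_1\to\Gamma_\eta\subset |H+\eta|$ is given by a linear system whose degree we can compute by intersecting with a line of $|H+\eta|$, namely the divisors through a fixed general point $x\in C_1$. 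Condition $x\in\{-2R,R,R+\eta\}$ gives $4+1+1=6$ values of $R$, i.e.\ $3$ points of $C_1/\eta$. Hence $\Gamma_\eta$ has degree $3$ and is birational to $C_1/\eta$ (genus 1), so it is a smooth plane cubic, and $\sC_\eta = v_2(\Gamma_\eta)$ has degree $6$. One must separately confirm injectivity of $C_1/\eta\to\Gamma_\eta$ (immediate from the bijection above) to conclude.
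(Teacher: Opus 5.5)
Your argument is correct and is essentially the paper's. You decompose $\sC$ by $\eta = P_Q(1)-P_Q(2)$ and parametrize $\sC_\eta$ by $C_1/\eta$ through $R \mapsto (-2R)+R+(R+\eta)\in|H+\eta|$ followed by $v_2$. Injectivity comes from the four tangency points over $P$ (your coset of $Pic^0(C_1)_2$), and $\deg d = 3$ then forces the image to be a smooth cubic. The only variation is that you get $\deg d' = 6$ from the incidence count $x\in\{-2R,R,R+\eta\}$, an intersection number $4+1+1$ on $C_1\times C_1$, rather than from the paper's degree-$12$ Gauss-map series in $|2H|$. Do delete the abandoned aside in your last paragraph: the points of $\de$ sum to $\eta$, not $\eta-R$, and $R\mapsto \de(R)$ is not a translate of the $|H+\eta|$-embedding; it visibly factors through $C_1/\eta$, which is exactly what you use afterwards.
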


\begin{proof}
Here $H_j $ cuts the divisor $\de_j =  P_Q + 2 P_Q(j) \equiv H$ and $ 2 \de = \de_1 + \de_2$, hence $$ \eta : = P_Q(1) - P_Q(2) \Rightarrow 2 \eta \equiv 0, \ \ P_Q \equiv H -  2 P_Q(1) \equiv H -  2 P_Q(2).$$

Hence, once $\eta \in Pic(C_1)[2]\setminus \{0\}$ is fixed,  such a conic $Q$ of rank $2$ is determined by $P' : = P_Q(1)$, or, equivalently,  by $P'': = P_Q(2)$.

Hence we get a map (here $v_2$ is the second Veronese map) $$ d :  C_1 / \eta \ra  |H + \eta| \ra v_2 (|H + \eta|) \subset S^2(|H|) \subset | 2 H|,$$
defined as follows.

The map $d$ is induced  by the map $d'  : C_1 \ra |H + \eta| $ obtained in this way:
for  each point $P' \in C_1$,   take $P $ to be the third  intersection 
point of the tangent  $T_{P'}$ (to $C_1$ at $P'$) with $C_1$,
and define, for $P' \in C_1$,  $ P'' :\equiv P' + \eta$:   then 
$$ P' \mapsto d'(P') := \de(P') : = P + P' + P'' \in |H + \eta|.$$
And $$  q : C_1 \ra S^2(|H|) \ {\rm is \ such \ that  } \ q(P') = q(P'') = T_{P'}+ T_{P''}.$$

It is interesting to observe that  the map $d $ is injective. Indeed, for each point $P \in C_1$, projection from $P$ to $\PP^1$
(via $ |H - P|$) is a double covering, branched in $4$ points, which are the further points of intersection
of the tangents from $P$ to $C_1$.

If $P$ is not a flex point, then the $4$ tangency points $P(1), P(2), P(3), P(4)$ are different from $P$,
and the pairwise differences determine three nontrivial elements $\eta$ of 2-torsion order.

Without loss of generality, since $\eta_j :  \equiv P(1)- P(j)$ and $ \eta_3 \equiv \eta_1 + \eta_2$, 
we have $P(j)- P(h) \equiv \eta_j + \eta_h$.

The same argument holds if $P$ is a flex: in this case  $P(1) = P$.

Hence for each point $P$ we get, for fixed $\eta$, exactly  two such conics $Q$ of rank $2$ with $P = P_Q$, according to the fact that $C_1 / \eta \ra C_1/ C_1[2]$
has degree $2$. 

 We shall soon see  that  injectivity holds  also schematically, and  $ d (C_1 / \eta) $ is a smooth cubic $C'_{\eta} \subset |H+\eta|$;
and, as a consequence, given a general line  $\Lambda \subset |H+\eta|$, $\Lambda \cap  C'_{\eta}$ consists of three points.

Observe in fact  that the Veronese surface $v_2 (|H + \eta|) $ consists of the set  
$$ v_2 (|H + \eta|)  = \{ 2 \de | \de \in |H + \eta| \} \subset |2 H| .$$

Inside the projective space of conics $|2 H|$ we have the discriminant hypersurface $\Sigma$ consisting of
the reducible conics: $\Sigma$ contains the other Veronese surface $V: = v_2 (|H |) $, which is the variety
of double lines, and $\Sigma \setminus V$ admits an  unramified double covering which associates to each
rank 2 conic the two ordered pairs of lines which make the conic (and indeed the intersection of $\Sigma$ with a 
general plane $\pi$  yields a plane cubic given together with a 2-torsion divisor).

We can view the cubic determinantal hypersurface $\Sigma$ as the symmetric 2-fold product $S^2(|H|)$,
there is in fact a map $S^2 : |H|  \times |H| \ra S^2(|H|)$ which ramifies exactly on the diagonal (which maps to 
the Veronese surface $V$). Observe that the map which is the composition of $S^2$ with the inclusion in $|2H|$ 
is a bilinear map of total degree $6 = (1,1)^4$,
which however factors through the degree 2 covering $S^2$, and in fact $\Sigma$ has degree 3.

Consider now the set theoretical intersection of the Veronese surface $v_2 (|H + \eta|) $ with $\Sigma$:
it yields  the rank 2 conics such that their divisors are of the form $2 \de$, for $\de \in |H + \eta|$.
By our previous analysis, this is the image $q( C_1)$, that is, 
the Veronese embedding $\sC_{\eta}$ of $C'_{\eta}$.

 The schematic intersection $v_2 (|H + \eta|) \cap \Sigma$ is  
a curve of degree $6$, correponding then to a cubic curve  inside $|H + \eta|$.

Another  way to see this is to go back to  the Gauss map $\ga$ of $C_1$ to its dual sextic curve $C_1^{\vee}$, 
associating to $P$ the line $T_P $, viewed as a point $\ga (P)$  in $ |H| $.

The Gauss map  is given by a subsystem of $|2H|$,
because it  is given by the partial derivatives of the equation  of $C_1$.

Hence the map $q : C_1 \ra S^2(|H|)$ is obtained from the composition of the map $i $ associating to $P'$ the pair $(P', P'') \in C_1 \times C_1$,
 with the map $ \ga \times \ga :  C_1 \times C_1 \ra |H|  \times |H|$ followed by symmetrization  in order to obtain 
$$ C_1 / \eta \subset S^2 (C_1) \ra S^2(|H|) \subset |2H|.$$
Hence $q$ is given by a linear series of degree $12$, hence the degree of $d'$ is $6$ and the degree of $d$ equals $3$.
We conclude that $C_1 / \eta \ra C'_{\eta}$ is an injective map onto a cubic curve, hence $C'_{\eta}$ is smooth
and $d$ yields  an isomorphism with $C'_{\eta}$.

We conclude the proof observing that clearly the $4$ Veronese surfaces $v_2 (|H + \eta|)$ are disjoint,
for different $\eta$'s  of $2$-torsion.

\end{proof}

\begin{corollary}
Consider now a line $\Lambda \subset |H+\eta|$, and let $\Lambda \cap C'_{\eta}$ be a  divisor consisting of three points 
corresponding to rank $2$ conics $Q_1, Q_2, Q_3$. We then have three points $P_1, P_2, P_3 \in C_1$,
which are the kernels of the corresponding symmetric matrices ($P_j$ is, as before, the  double point of $C_j$).

Then the three points $P_1, P_2, P_3 \in C_1$ are collinear.
\end{corollary}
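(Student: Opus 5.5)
We have to show that $P_1+P_2+P_3 \equiv H$ on $C_1$; then, $C_1$ being embedded by the complete system $|H|$, the three points lie on a line. We work in the group law of $C_1$, with the flex $O$ as origin, so that $A+B+C\equiv H$ if and only if $A+B+C = 0$ in the group.

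For each rank $2$ conic $Q_j$ we write $Q_j=2\de_j$ on $C_1$, with $\de_j \in \Lambda \subset |H+\eta|$. By the proof of the preceding Theorem,
$$\de_j = P_j + P_j' + P_j'', \qquad P_j'' \equiv P_j' + \eta, \qquad P_j \equiv H - 2P_j'.$$
In group-law terms this means $P_j = -2P_j'$, and $\de_j \equiv H+\eta$ holds automatically. The collinearity condition $\sum P_j = 0$ then becomes $2(P_1'+P_2'+P_3') = 0$.

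The key input is that $\de_1,\de_2,\de_3$ are three divisors in the pencil $\Lambda$ cut out by the line $\Lambda$ on the cubic $C'_\eta$. Recall that $d: C_1/\eta \to C'_\eta \subset |H+\eta|$ has degree $3$, so the hyperplane class of $C'_\eta$ pulls back to a degree $3$ class $H'$ on $C_1/\eta$. We identify this class: pulled back to $C_1$ it has degree $6$. We want to show it is the class of $d'^*$ (a line in $|H+\eta|$), and that for the point $P'$ this gives a relation which is linear in $P'$. Concretely, a line in $|H+\eta|$ is the set of divisors $\de \in |H+\eta|$ containing a fixed point $R \in C_1$ (the dual picture). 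Hence $d'^*(\text{line}_R) = \{P' : R \in \{-2P', P', P'+\eta\}\}$, which as a divisor on $C_1$ is
$$\{P' : -2P' = R\} + \{R\} + \{R-\eta\},$$
with $4+1+1=6$ points, matching the degree. The sum in the group is $\sum_{-2P'=R} P' + 2R - \eta$. The four solutions of $-2P'=R$ differ by $C_1[2]$, so their sum equals $4P'_0 + 0 = -2R$. The total sum is therefore the constant $-\eta$, independent of $R$, as it must be for a linear system.

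Now take a general line $\Lambda$; for a non-general one we argue by continuity. Its preimage under $d'$ is the divisor $\sum_j (P_j' + P_j'')$, with $6$ points, so we get
$$\sum_j (2P_j' + \eta) = -\eta, \qquad \text{i.e.} \qquad 2\sum_j P_j' = -4\eta = 0 .$$
This yields $\sum_j P_j = -2\sum_j P_j' = 0$, which is exactly the collinearity of $P_1,P_2,P_3$.

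The main obstacle is justifying that the pullback of a hyperplane class of $|H+\eta|$ under $d'$ is computed as above, namely that the divisor-containing-$R$ description gives the right multiplicities. This holds because $d'$ has degree $6$ onto its image counted with the degree of $d$, as established in the Theorem, and linear equivalence is preserved under specialization of $R$.
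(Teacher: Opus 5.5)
Your proof is correct, and it takes a genuinely different route from the paper.

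\textbf{The paper's route.} The paper first pulls back a hyperplane of $|2H|$ through $q=v_2\circ d'$ to a divisor of type $\pi_1^*(2H)+\pi_2^*(2H)$ on $C_1\times C_1$. This only computes $q^*\mathcal{O}(1)\equiv 4H$, i.e.\ twice $d'^*\mathcal{O}(1)$, so it yields only the weaker relation $2\sum_j P_j\equiv 2H$. To remove the $2$-torsion ambiguity, the paper then appeals to Lemma \ref{line-envelope}. The conic $v_2(\Lambda)$ is a quadratic pencil of conics everywhere tangent to $C_1$, and its envelope is $C_1\cup X_1$ with $X_1$ a line. The double points of the degenerate members then lie on $X_1$.

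\textbf{Your route.} You compute the class of $d'^*\mathcal{O}(1)$ itself. You test it against the special lines $\ell_R=\{\delta\ge R\}$ and use Abel's theorem, finding the class $2H+\eta$ (its points add up to the $2$-torsion point $\eta$). This is exactly the information lost in the paper's first computation. The relation $2\sum_j P'_j=0$, hence $\sum_j P_j=0$, then follows from $d'^*\Lambda=\sum_j(P'_j+P''_j)$.

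\textbf{Comparison.} Your argument is self-contained, purely in terms of the group law, and it identifies $d'^*\mathcal{O}(1)$, which is of independent interest. The paper's argument additionally identifies the line through $P_1,P_2,P_3$ as the envelope line $X_1$ tangent to all conics of the pencil, which is what matters for the rest of the paper.

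\textbf{Points of exposition.} Three things should be stated more carefully.
\begin{itemize}
\item Not every line of $|H+\eta|$ is of the form $\ell_R$. These lines form only a one-parameter family, corresponding to the points of the image of $C_1$ in $|H+\eta|^{\vee}$. You should say explicitly that you use the $\ell_R$ only to compute the class of the pullback. That class is the same for every line, since $C'_\eta$ is not contained in a line.
\item You should take $R$ not a flex ($3R\neq 0$). Then the six points are distinct, and $\deg d'^*\mathcal{O}(1)=6$ (from $q^*\mathcal{O}(1)\equiv 4H$) forces multiplicity one.
\item The same degree count gives $d'^*\Lambda=\sum_j(P'_j+P''_j)$ whenever the three points of $\Lambda\cap C'_\eta$ are distinct. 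So for the statement as posed, no genericity or continuity argument is actually needed. Your closing justification about ``specialization of $R$'' can be replaced by this.
\end{itemize}
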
 

\begin{proof}

The three points $P_1, P_2, P_3 \in C_1$ are collinear if and only if 
$$H \equiv  \sum_j P_j \Leftrightarrow 2H \equiv  \sum_j 2 P' _j  \ (\equiv \sum_j 2 P'' _j ).$$

The system $\Lambda$ maps to a conic $\sQ \subset 
 v_2 (|H + \eta|) \subset |2H|$, and its intersection with the discriminant hypersurface $\Sigma$
yields  the three points with multiplicity $2$. Moreover,   $\Lambda \subset |H + \eta|$ maps under the Veronese map to  the intersection of $v_2 (|H + \eta|)$ with a  special 
hyperplane, cutting $\Lambda$ with multiplicity 2.  Such a hyperplane 
 pulls back to a bilinear form on $\PP^2 \times \PP^2$ (which maps to $\Sigma$).

This shows that the three intersection points, counted with multiplicity 2,  correspond to the intersection of $\iota (C_1)$ with a
divisor on $C_1 \times C_1$ of the form $\pi_1^* (2H) + \pi_2^*(2H)$;   hence it follows that 
$$4H  \equiv  \sum_j 2 (P' _j + P''_j) \equiv  \sum_j 2 (H -  P_j).$$

This  implies only  the weaker assertion  that $\sum_j  2 P_j \equiv 2H$. 

Our stronger assertion, that $\sum_j   P_j \equiv H$, 
  follows from  
    Lemma \ref{line-envelope} since 
     the conic $\sQ \subset |2H|$ yields a quadratic pencil of conics $Q_{\la}$ tangent to $C_1$, hence the three points $P_1, P_2,P_3$ belong to a line.
\end{proof}

\bigskip

\section{Explicit equations    of 
special quadratic  pencils of conics starting  from the Weierstrass normal form of a cubic} 

In this section we consider a cubic $C_1$, with  equation  in Weierstrass canonical form
\begin{equation}\label{weierstrass} C_1 : = \{ x | x_2^2 x_3 -  x_1 ( x_1 - x_3) (x_1 - \mu x_3) =0\}
\end{equation}

and we want to explicitly determine the 1-dimensional systems of everywhere tangent conics $Q_\la$, 
and also the equation of the line $X_1$ everywhere tangent to the pencil.

$ C_1$ is smooth if $\mu \neq 0,1$, while  for $\mu=0, 1$ we get a nodal cubic.

The 2-torsion points different from the flex at infinity ($(0,1,0)$ with flex tangent $\{x_3=0\}$) are the  points on the line
$x_2=0$, namely $ (0,0,1), (1,0,1), ( \mu, 0, 1)$. 

We observe first  that translation by a 2-torsion divisor class $\eta$ carries the system $|H|$ to the system $|H + \eta|$.

Then the embedding corresponding to the system $|H + \eta|$ is the composition  $i \circ \tau_\eta$, where 
$i :  C_1 \ra |H|^{\vee}$ is the inclusion we started with, and $\tau_\eta$ denotes translation by $\eta$.

We may assume, without loss of generality,  that $\eta$ corresponds to the point  $(0,0,1),$
which, by slight abuse of notation, will be called $\eta$.

\begin{theorem}\label{weierstrass}

Given the plane cubic in Weierstrass normal form
$$C_1 : = \{ x | x_2^2 x_3 -  x_1 ( x_1 - x_3) (x_1 - \mu x_3) =0\},$$

with origin at the point at infinity $(0,1,0)$, and with 2-torsion divisor associated to the
point $\eta= (0,0,1),$
consider the embedding of $C_1 \ra C_1'$ given by the linear system $|H + \eta|$.
This is such that 

$$P + \eta =   ( \mu x_1^2 , - \mu x_2 x_1, - \mu x_1 x_3 + x_2^2  +   ( 1 + \mu)x_1 ^2 ) = : (z_1,z_2,z_3).
$$

Then to the    pencil $\Lambda$ on $C_1' \subset \PP^2$  cut by the linear pencil of lines with base point $(1, w_2, w_3) \in \PP^2$
corresponds a special quadratic pencil of conics,  with cubic $C_1$ and residual line 

\[ X_1(x) = \frac{\text{Disc}(Q_\la)(x)}{C_1(x)} = 4\mu^2 (-w_2^2 - 2 w_3 \mu + \mu + 1) x_1 + 8w_2 \mu^2x_2 + 4 \mu^3 (w_3^2 \mu - 1) x_3 .\]

The equations of the quadratic pencil are given in equation \eqref{Q}.
\end{theorem}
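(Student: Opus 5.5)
We verify the formula for the embedding $C_1 \ra C_1'$ first. Then we construct the quadratic pencil by pulling back a pencil of lines and extracting square roots. Finally we compute the envelope discriminant as in Lemma \ref{line-envelope}.

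\textbf{Step 1: the embedding $P \mapsto P+\eta$.} The translation $\tau_\eta$ by the 2-torsion point $\eta=(0,0,1)$ can be written down from the chord construction: $P+\eta$ is the third point of intersection with $C_1$ of the line through $-P$ and $\eta$. In affine coordinates $x_3=1$, with $x=x_1, y=x_2$, the line through $(x,-y)$ and $(0,0)$ is $y' = -(y/x)x'$. Substituting into $y'^2 = x'(x'-1)(x'-\mu)$ and using the product of roots gives $x' = \mu/x$ and $y' = -\mu y/x^2$. Homogenizing with denominator $x^2$ and clearing, $P+\eta$ is the point $(\mu x_1 x_3, -\mu x_2 x_3, x_1^2)$. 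Then $i\circ\tau_\eta$ is given by quadrics restricted to $C_1$. We rewrite these modulo the cubic, using $x_2^2x_3 = x_1(x_1-x_3)(x_1-\mu x_3)$, into the stated form $(z_1,z_2,z_3)$: multiply by $x_1/x_3$ and substitute $x_2^2$. Checking that the two triples agree on $C_1$ up to a common factor is a routine identity. This gives the conics $z_1,z_2,z_3 \in H^0(\hol_{C_1}(2H))$ such that $\{z=0\}\cdot C_1 = 2\,\de$ with $\de\in|H+\eta|$, as the construction before Lemma \ref{quadratic-pencil} requires.

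\textbf{Step 2: the quadratic pencil.} A line of $\PP^2_z$ through $(1,w_2,w_3)$ is $\ell_\la = \la_2(z_2 - w_2 z_1) + \la_3(z_3 - w_3 z_1)$. Its pullback $\ell_\la(z(x))$ is a quadric in $x$ vanishing on $C_1$ along $P+\eta \in \ell_\la$. This is the divisor $\de_\la \in |H+\eta|$ taken as a degree-3 divisor on $C_1$, but the pullback cuts $2\de_\la$ plus extra base points, so it is not yet the conic $Q_\la$. Instead we use the conic $Q_\la$ with $Q_\la\cdot C_1 = 2\de_\la$. The $z_i$ are themselves sections of $2(H+\eta)$ pulled back to $C_1$, and $2(H+\eta)\equiv 2H$. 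So the conic we want is the degree-2 form in $x$ whose restriction to $C_1$ equals $\ell_\la(z)^2$ divided by a fixed section, or equivalently the unique conic congruent to $\ell_\la(z)$ modulo the cubic after removing the base divisor. Concretely, I expect $\ell_\la(z(x))$ to be the conic $Q_\la$ directly once the representative is chosen modulo $C_1$. The quadratic dependence on $\la$ (Lemma \ref{quadratic-pencil}) then arises by writing $Q_\la = \sum \la_i\la_j Q_{ij}$ after adjusting the representatives by multiples of $C_1$. This is equation \eqref{Q}. The hardest step is choosing the correct representative modulo $C_1$ so that the family is genuinely quadratic in $\la$ with $\la$-coefficients that are conics. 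I would first test candidate forms on the three rank-2 members, using the Corollary on collinear singular points as a check.

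\textbf{Step 3: the line.} With $Q_\la = \la_2^2 A + \la_2\la_3 B + \la_3^2 C$, the envelope is $\mathrm{Disc}(Q_\la) = B^2 - 4AC$, a quartic. By Lemma \ref{line-envelope} and Proposition \ref{envelope}, it contains $C_1$, since every point of $C_1$ is a tangency point. So $\mathrm{Disc}(Q_\la) = C_1(x)\cdot X_1(x)$. Dividing, by polynomial division in $x_2$ using the monic-in-$x_2^2x_3$ Weierstrass form, gives the linear form. Its coefficients $4\mu^2(-w_2^2-2w_3\mu+\mu+1)$, $8w_2\mu^2$ and $4\mu^3(w_3^2\mu-1)$ are then read off. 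The remaining claims (Assumption \ref{**}: tangency of $Q_\la$ to $X_1$ at a point linearly parametrized by $\la$, generic smoothness, no rank 1) follow from Proposition \ref{envelope} and the fact that $\eta$ is nontrivial.
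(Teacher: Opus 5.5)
\textbf{There is a genuine gap in Step 2, and it leaves Step 3 with nothing to compute from.}

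Your Step 1 is correct. The chord construction gives $P+\eta=(\mu x_1x_3,-\mu x_2x_3,x_1^2)$, and on $C_1$ this agrees with the stated triple after multiplying by $x_1/x_3$. However, the last sentence of Step 1 and all of Step 2 rest on a miscount.

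\begin{itemize}
\item The pull-back $\ell_\la(z(x))$ of a line of the pencil is a conic. It meets $C_1$ in $\de_\la$ \emph{plus the fixed base divisor} of the net $(z_1,z_2,z_3)$, not in $2\de_\la$.
\item That base divisor is $3\eta$ for the paper's triple and $\eta+2O$ for yours. Degree $6$ leaves no room for ``$2\de_\la$ plus base points''.
\item So your concrete guess, that $\ell_\la(z(x))$ becomes $Q_\la$ once a representative modulo $C_1$ is chosen, is false. The form $\ell_\la(z(x))$ is linear in $\la$ and has the wrong divisor on $C_1$.
\item Moreover, a conic cannot be changed by multiples of a cubic at all, since $H^0(\hol_{\PP^2}(2))\to H^0(\hol_{C_1}(2))$ is injective.
\end{itemize}
Hence $Q_\la$ is never actually produced and equation \eqref{Q} is not derived. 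In Step 3 there is then nothing from which to ``read off'' the coefficients of $X_1$.

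\textbf{The missing step.} It is the idea you mention only in passing: square $\ell_\la(z)$, then divide modulo $C_1$ by a fixed conic that cuts twice the base divisor. The paper takes $q_\eta=z_3$, which meets $C_1$ only in $6\eta$, and writes $F_\la=\la_1q_\eta-\mu x_1B$. It then solves for a linear form $l$ with $F_\la^2-l\,C_1$ divisible by $q_\eta$, through the auxiliary conic $N_\la$; the quotient is \eqref{Q}.

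With your triple this is shorter, because you divide by $x_1x_3$, which cuts $2\eta+4O$. Use the paper's convention $\ell_\la=\la_1(z_3-w_3z_1)+\la_2(z_2-w_2z_1)$ and set $\alpha=x_1-w_3\mu x_3$, $\beta=x_2+w_2x_1$. Then $\ell_\la=\la_1x_1\alpha-\la_2\mu x_3\beta$. Define conics $G_0,G_1,G_2$ by $G_1=-2\mu\alpha\beta$, $x_3G_0=C_1(x)+x_1\alpha^2$ and $x_1G_2=\mu^2(x_3\beta^2-C_1(x))$; these divisions are exact. Then $Q_\la=\la_1^2G_0+\la_1\la_2G_1+\la_2^2G_2$ is exactly \eqref{Q}, and one has the polynomial identity
\[
x_1x_3\,Q_\la=\ell_\la^2+(\la_1^2x_1-\la_2^2\mu^2x_3)\,C_1(x).
\]
This identity proves $Q_\la\cdot C_1=2\de_\la$. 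It also gives
\[
x_1x_3\,(G_1^2-4G_0G_2)=4\mu^2\,C_1(x)\,\bigl(C_1(x)+x_1\alpha^2-x_3\beta^2\bigr),
\]
where the last factor equals $x_1x_3\bigl((1+\mu-2w_3\mu-w_2^2)x_1-2w_2x_2+\mu(w_3^2\mu-1)x_3\bigr)$. So divisibility of the discriminant by $C_1$, and the line itself, follow directly, with no appeal to Lemma \ref{line-envelope}.

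\textbf{A sign to check.} Carried out honestly, this computation gives $-8w_2\mu^2$ as the $x_2$-coefficient of $X_1$. The stated $+8w_2\mu^2$ matches the paper's expanded form of $Q_\la$ after \eqref{Q}, which is \eqref{Q} with $w_2\mapsto -w_2$. So that sign has to be verified, not read off.
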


\begin{proof}
If $P = (x_1, x_2, x_3)$, then $ P + \eta$ is the third intersection point with $C_1$ of the line through 
$-P = (x_1, -x_2, x_3)$ with  $(0,0,1),$ which equals the line 
$$\{ (t_0 x_1, - t_0  x_2, t_0 x_3 + t_1)\} .$$

Solving the equation $$  t_0^2 x_2^2 (t_0 x_3+t_1)  -  t_0 x_1 ( t_0 x_1 - t_0 x_3-t_1 ) (t_0 (x_1 - \mu  x_3) - \mu t_1) =0$$
 we get the obvious roots  $t_0=0$,  $t_1=0$, hence we only look at the terms divisible by $t_0 t_1$;
 and dividing by $t_0 t_1$ we get 
$$t_0 [ x_2^2  + x_1 ^2 ( 1 + \mu) - 2 \mu x_1 x_3 ] =  t_1 \mu x_1.$$

This is solved by $ t_0 = \mu x_1, t_1 =  [ x_2^2  + x_1 ^2 ( 1 + \mu) - 2 \mu x_1 x_3 ] $.

Hence, as asserted,
 \begin{equation}\label{translation}P + \eta =   ( \mu x_1^2 , - \mu x_2 x_1, - \mu x_1 x_3 + x_2^2  +   ( 1 + \mu)x_1 ^2 ) = : (z_1,z_2,z_3).
   \end{equation}

We have set for convenience $P + \eta = : (z_1(x), z_2(x), z_3(x))$.

The reader will observe that the given linear system of quadratic polynomials $(z_1(x), z_2(x), z_3(x))$ has a base point for $x_1=x_2=0$, 
that is, at the point $\eta$,
and with  multiplicity $3$  (since at this point, setting $x_3=1$,  the subscheme $x_1^2= x_1x_2= - \mu x_1  + x_2^2$ is a curvilinear subscheme on the curve $\{  \mu x_1  = x_2^2 \}$,  of  length 3 ).

Then a  linear pencil $\Lambda \subset \PP(|H + \eta|)$ corresponds to a pencil $ \la_1 F' (z) + \la_2 F''(z)=0$,
where $F' , F''$ are linear forms.

And the conic $Q_\la$ is tangent to  $C_1$ in three points (with multiplicity 2),
 which are the residual  intersection of $C_1$ with 
 $$ \{ 0 = (\la_1 F' (z) + \la_2 F''(z))^2\} \Leftrightarrow $$
 $$ \{ 0 = (\la_1^2 (F')^2 + 2 \la_1 \la_2 F' F'' + \la_2^2 (F'')^2) ( \mu x_1^2 , - \mu x_2 x_1, - \mu x_1 x_3 + x_2^2  +  ( 1 + \mu) x_1 ^2  )\}.$$

By residual intersection we mean that we remove the divisor consisting of the point $\eta$ counted with multiplicity $6$ (which appears in the divisor of zeros on $C_1$ of the above  
quartic equation).

\bigskip
\subsection{Determination of the conics $Q_\la$ }

The conic  $Q_\la$ can be found  as follows: 

\medskip

1) we determine a conic $q_\eta$ such that it intersects $C_1$ only at the point $\eta$ (hence with multiplicity $6$):
\begin{equation}
 q_\eta = - \mu  x_1x_3   + (1 + \mu)  x_1^2  +  x_2^2
\end{equation} 

2) we look for a linear form $l(x) = l_1 x_1 + l_2 x_2 + l_3 x_3$ such that,   setting $F_\la^2 : = (\la_1^2 (F')^2 + 2 \la_1 \la_2 F' F'' + \la_2^2 (F'')^2)$,
then  $ (F_\la^2  - l(x) C_1)$
is divisible by $q_\eta$.

3) Then we take $Q_\la$ as the result of division of $ (F_\la^2  - l(x) C_1)$
 by $q_\eta$.

\medskip

For step 1), take local coordinates in the plane $(x_1, x_2)$ at the point $\eta$ (setting $x_3=1$):
then the local equation of $C_1$ is 
$$ x_2^2 - x_1 (\mu - (1+\mu) x_1 + x_1^2) =0.$$

Hence $x_2$ is a local parameter for $C_1$ at $\eta$, and 
$$x_1 = (1/\mu )[ x_2^2 + (1+\mu ) x_1^2 - x_1^3 ]= (1/\mu ) [ x_2^2 + h.o.t ]$$ vanishes of order $2$.

We calculate the Taylor development of $x_1$ up to order $5$, this shall be sufficient for our purposes:

$$x_1 = (1/\mu )[ x_2^2 + (1+\mu ) x_1^2 + o(5) ]= \frac{1}{\mu }  x_2^2 + \frac{ 1+\mu }{\mu^3} x_2^4  + o(5) .$$ 

Then, writing  $q_\eta$ in affine coordinates (since it must vanish at the origin) as
$$ a_1 x_1 + a_2 x_2 + b_1 x_1^2 + b_2 x_2^2 + c x_1 x_2,$$ 
we determine the coefficients by the fact that the Taylor development of order up to $5$ is identically zero:
$$ (a_1+ c  x_2)  [\frac{1}{\mu }  x_2^2 + \frac{ 1+\mu }{\mu^3} x_2^4]  + a_2 x_2 + b_2 x_2^2  + b_1 \frac{1}{\mu^2 }  x_2^4  = o(5) .$$

This easily yields $a_2=0$, $b_2 = - \frac{1}{\mu }  a_1$, $c=0$.

To simplify the expressions, multiply everything by $\mu^3$, getting 
$$ a_1  (1+\mu ) x_2^4     + b_1 \mu  x_2^4  = o(5)  \Leftrightarrow a_1 = - \mu , b_1 = 1 + \mu .$$

In the end, 
$$ q_\eta = - \mu  x_1  + (1 + \mu)  x_1^2 +  x_2^2 .$$ 

Step 2) The process of division by $ q_\eta$ of a polynomial $F(x)$ is quite simple: it is just the Euclidean algorithm in the
ring $\CC[x_1, x_3 ] [x_2]$, applied to the monic polynomial 
$$ q_\eta = (- \mu  x_1x_3   + (1 + \mu)  x_1^2 ) +  x_2^2 = : A(x_1, x_3 ) + x_2^2  .$$

The main concern here is to find the linear form $l(x)$. The pencil $\Lambda$ is determined by its base point $(w_1, w_2, w_3)$, and we consider  the inhomogeneous case where 
$w_1=1$.

{\bf Case where $w_1 \neq 0$.}

Then $ F_\la = \la_1 (z_3 - w_3 z_1) + \la_2 (z_2 - w_2 z_1) = \la_1 z_3 + \la_2 z_2 - ( \la_1 w_3 + \la_2 w_2)  z_1 $ and
seeing  the $z_i$'s as functions of $(x)$ we get  
$$  F_\la =  \la_1   (- \mu x_1 x_3 + x_2^2  +  x_1 ^2 ( 1 + \mu) ) - \la_2  \mu x_2 x_1 - ( \la_1 w_3 + \la_2 w_2)  \mu x_1^2
= $$
$$ = \la_1 q_\eta - x_1 (  \la_2  \mu x_2 + ( \la_1 w_3 + \la_2 w_2)  \mu x_1) = : \la_1 q_\eta - x_1 \mu B (x_1, x_2).
$$
Hence, 
$$ F_\la ^2 =  [\la_1 q_\eta - x_1 \mu B (x_1, x_2)]^2 = q_\eta (\la_1^2  q_\eta - 2 \la_1 x_1 \mu B) + x_1^2 \mu^2 B^2 .$$

The divisor cut on $C_1$ by $(x_1 B)$ equals 3 times the point $\eta$, plus  the flexpoint $O$ at infinity,
plus  the two points $P_1, P_2$ which, together with the point $\eta$,
are the intersection of $C_1$ with the line  $B $,
where we  have set  
\begin{equation}\label{B,c}
c : = c (w, \la) : =  ( \la_1 w_3 + \la_2 w_2) , \ \ B  = :  \la_2   x_2 + c   x_1
\end{equation}

In fact, the points $P_1, P_2,$  satisfy, in affine coordinates
  $$  (c  x_1)^2  =  (\la_2    x_2)^2 = 
  (\la_2   )^2 x_1 (x_1 -1 ) (x_1 - \mu ),$$
hence correspond to the roots of the quadratic polynomial
$$ f_\la : = - c^2     x_1 +  \la_2  ^2 (x_1 -1 ) (x_1 - \mu ),$$
on the line  $B$. 

We  determine  now a conic $N_\la$ such that the divisor of $N_\la q_\eta$ on $C_1$ equals 
$$ div((x_1 B)^2) = 6 \eta + 2 O + 2 P_1 + 2 P_2.$$

 Hence $ div(N_\la) = 2 O + 2 P_1 + 2 P_2$.
 
  $N_\la$, which passes through $O$, is   tangent at $O$, hence also to the line at infinity, hence  it has the form
  of a multiple of 
  $$N'_\la = x_3 g_\la (x) + x_1^2 = x_3 (g_1 x_1 + g_2 x_2 + g_3 x_3) + x_1^2.$$
  The intersection points with the line $B$ are given by the roots 
  of 
  $ x_3 (g_1 x_1 - \frac{c}{\la_2}  g_2 x_1 + g_3 x_3) + x_1^2 $,
  in affine  coordinates  $$   \frac{g_1 \la_2 - c g_2 }{\la_2}  x_1 + g_3  + x_1^2 $$
  which must equal up to a function of $\la_1, \la_2$ 
  $$- c^2  x_1 + \la_2  ^2 (x_1 -1 ) (x_1 - \mu )= \la_2^2 ( x_1^2 - (1 + \mu + \frac{c^2}{\la_2^2} )x_1  + \mu) .$$
  
  Hence  we set $ g_3 = \mu  ,   \frac{g_1 \la_2 - c g_2 }{\la_2}  = - (1 + \mu + \frac{c^2}{\la_2^2} ).$
  
  The last equation can be rewritten as
 $$ \la_2^2  g_1  - \la_2 c g_2  = - \la_2^2(1 + \mu)  - c^2  .$$
 
 To preserve the homogeneity in $\la_1, \la_2$ we set $N_\la : = \la_2^2 N'_\la$.

 We pass now to the full  equations for the $g_\la$'s, ensuring  that the difference of a  multiple of $ N_\la q_\eta $ subtracted by   $ \mu^2 x_1^2 B^2$
 is divisible by $C_1$.  That is:
 $$ d N'_\la q_\eta - \mu^2 x_1^2 B^2 = l(x) C_1(x) $$


 Then $$Q_\la = \la_1^2 q_\eta - 2 \la_1 \mu x_1 B(x_1, x_2) + d (x_3 g_\la + x_1^2)=$$
 $$ = \la_1^2 (- \mu  x_1x_3   + (1 + \mu)  x_1^2 +  x_2^2 )  - 2 \la_1 \mu x_1 B(x_1, x_2) + d (x_3 g_\la + x_1^2) .$$

\subsection{Finding the linear form $l(x)$}
To determine $l(x) =  l_1 x_1 + l_2 x_2 + l_3 x_3$ we solve the equations in the variables $l_1, l_2, l_3$ looking at the monomials of degree 4 ordered in lexicographic order:

\begin{enumerate}
\item
$- l_1 = d (1+ \mu) - \mu^2 c^2$
\item
$l_2 =  2 \la_2 \mu^2 c$
\item
$l_1 (1 + \mu) - l_3 = d g_1 (1+\mu)  - \mu d$
\item
$d - \la_2^2 \mu^2 =0$
\item
$ l_3 (1+ \mu) - l_1 \mu =  d \mu (1 + \mu - g_1)$
\item
$ l_2  (1+ \mu) = (1+ \mu) d g_2$
\item
$ l_1 = g_1 d$
\item
$l_2 \mu = \mu g_2 d$
\item
$l_3 \mu = \mu^2 d$
\item
$l_2 = d g_2$
\item
$l_3 = \mu d$.  
\end{enumerate} 

Equation (4) tells that $d =  \la_2^2 \mu^2$, (7), (10) and (11) determine $l_1, l_2, l_3$,
and (9), (8), (6) follow right away. (2) says that $\la_2 g_2 = 2 c$, which is new information.

In order to verify the other equations it may be useful to recall:
$$ g_3 = \mu, \ \ \la_2 g_2 = 2c , \ \ \la_2^2  (g_1+1 + \mu) =   \la_2 c g_2    - c^2  = c^2  \Leftrightarrow g_1 = - (1 + \mu) + \frac{c^2}{\la_2^2} $$
From this follow immediately (1) and (2). While (3) and (5) are identically verified.

\subsection{The conics $Q_\la$}
$$ Q_\la =  \la_1^2 (- \mu  x_1x_3   + (1 + \mu)  x_1^2  +  x_2^2 )  - 2 \la_1 \mu x_1 ( cx_1 + \la_2  x_2) + \la_2^2  \mu^2  (x_1^2 + x_3 (\mu x_3 + \frac{2 c x_2}{\lambda_2} + g_1 x_1) ) =
$$
$${\rm with \ }  c : = ( \la_1 w_3 + \la_2 w_2),$$
\begin{equation}\label{Q}
  x_1^2 [ \la_1^2 (1 + \mu) - 2 \la_1 c \mu + \la_2^2 \mu^2 ] - x_1 x_2 [2 \la_1 \la_2 \mu ] + x_1 x_3 [g_1 \la_2^2  \mu^2 - \mu \la_1 ^2 ] + x_2^2 [\la_1^2  ] + x_2 x_3 [ 2 c \la_2 \mu^2  ] + x_3^2 [\la_2^2 \mu^3 ] \end{equation}

\bigskip

It remains to be verified whether  this 1-parameter system of conics consists of conics tangent to a fixed line,
and that there are exactly 3 degenerate conics in the system, each counted with multiplicity 2
 (that is, each yielding a double root of the discriminant equation).

For $\la_1=0$ we get a  nondegenerate conic, since if moreover we assume $\la_2=1$, then the determinant equals 
$ - w_2^2 \mu^6 $; similarly for $\la_1=1$ and $\la_2=0$ the determinant equals 
$ \frac{- \mu^2}{4} $.

Hence we set for convenience $\la_1 =1$ and $\la_2 = \la$, hence then $c = w_3 + \la w_2$ :
$$ Q_\la =  x_1^2 [ (1 + \mu) - 2  c \mu + \la^2  \mu^2 ] - x_1 x_2 [2 \la \mu ] + x_1 x_3 [g_1 \la^2 \mu^2 - \mu ] + x_2^2 [1  ] + x_2 x_3 [ 2 c \la \mu^2  ] + x_3^2 [\la^2 \mu^3 ] 
$$

The determinant of $Q_\lambda$ is given by
\begin{align*} det&(Q_\lambda) = det
  \begin{bmatrix}
    (1 + \mu) - 2c\mu + \lambda^2 \mu^2 & -\lambda \mu & \frac{1}{2} (g_1 \lambda^2 \mu^2 - \mu) \\
    -\lambda \mu & 1 & c \lambda \mu^2 \\
    \frac{1}{2} (g_1 \lambda^2 \mu^2 - \mu) & c \lambda \mu^2 & \lambda^2 \mu^3
  \end{bmatrix} = \\ 
  =& \  \frac{\mu^2}{4} \cdot \Big( ( -4w_2^2\mu^4 )\lambda^6 + ( -4w_2^3\mu^3 + 8w_2w_3\mu^4 - 4w_2\mu^4 - 4w_2\mu^3 )\lambda^5 \\
  &+ ( -w_2^4\mu^2 + 12w_2^2w_3\mu^3 - 2w_2^2\mu^3 - 2w_2^2\mu^2 - 4w_3^2\mu^4 + 4w_3\mu^4 + 4w_3\mu^3 - \mu^4 - 2\mu^3 - \mu^2 ) \lambda^4 \\
  &+ ( 4w_2^3w_3\mu^2 - 12w_2w_3^2\mu^3 + 4w_2w_3\mu^3 + 4w_2w_3\mu^2 + 4w_2\mu^2 ) \lambda^3 \\
  &+ ( -6w_2^2w_3^2\mu^2 + 2w_2^2\mu + 4w_3^3\mu^3 - 2w_3^2\mu^3 - 2w_3^2\mu^2 - 4w_3\mu^2 + 2\mu^2 + 2\mu ) \lambda^2 \\
  &+ ( 4w_2w_3^3\mu^2 - 4w_2w_3\mu ) \lambda + ( -w_3^4\mu^2 + 2w_3^2\mu - 1 ) \Big). 
 \end{align*}

The determinant of the matrix is the square of the polynomial 
$$\delta(\lambda) =  \frac{1}{2} i \mu \Big( ( 2w_2\mu^2 )\lambda^3 + ( w_2^2\mu - 2w_3\mu^2 + \mu^2 + \mu )\lambda^2 + ( -2w_2w_3\mu ) \lambda + ( w_3^2\mu - 1 ) \Big) .$$  

 Furthermore, $Q_\la$ can be rewritten in the form:
 \begin{align*} Q_\lambda =&  \ ( \mu^2\, x_1^2 + w_2^2 \mu^2\, x_1 x_3 - \mu^3\, x_1 x_3 - \mu^2\, x_1 x_3 - 2w_2 \mu^2\, x_2 x_3 + \mu^3\, x_3^2 ) \lambda^2 \\
&+ ( 2w_2 \mu\, x_1^2 - 2\mu\, x_1 x_2 - 2w_2 w_3 \mu^2\, x_1 x_3 + 2w_3 \mu^2\, x_2 x_3 ) \lambda  \\
&+ ( -2w_3 \mu + \mu + 1 ) x_1^2 + ( w_3^2 \mu^2 - \mu ) x_1 x_3 + x_2^2.
\end{align*}

Consequently, computing the discriminant with respect to $\la$ yields:
\[ X_1(x) = \frac{\text{Disc}(Q_\la)(x)}{C_1(x)} = 4\mu^2 (-w_2^2 - 2 w_3 \mu + \mu + 1) x_1 + 8w_2 \mu^2x_2 + 4 \mu^3 (w_3^2 \mu - 1) x_3. \]

\end{proof}
 
\bigskip

{\bf Acknowledgements: } We thank Meng Chen for asking the question which motivated the present work,
and the referee for useful suggestions which have hopefully made the presentation of our arguments,
especially in part I, simpler and more understandable.

\end{document}